\numberwithin{equation}{section}
\DeclareMathSymbol{\shortminus}{\mathbin}{AMSa}{"39}
\theoremstyle{plain}
\newtheorem{Th}{Theorem}[section]
\newtheorem{Lemma}[Th]{Lemma}
\newtheorem{Prop}[Th]{Proposition}
 \theoremstyle{definition}
\newtheorem{Def}[Th]{Definition}
\newtheorem{Rem}[Th]{Remark}
\newtheorem{?}[Th]{Problem}
\newcommand{\Grid}{\mathbb{G}}
\newcommand{\Omark}{\mathbb{O}}
\newcommand{\Xmark}{\mathbb{X}}
\newcommand{\cgh}{cGH^{-}}
\begin{document}

\title{A Combinatorial Description of the knot concordance invariant epsilon}

\author[Dey]{Subhankar Dey}
\author[Do\u ga]{Hakan Do\u ga}
\address[]{Department of Mathematics, University at Buffalo}
\email{subhanka@buffalo.edu}
\address[]{Department of Mathematics, University at Buffalo}
\email{hakandog@buffalo.edu}

\begin{abstract} In this paper, we give a combinatorial description of the concordance invariant $\varepsilon$ defined by Hom, prove some properties of this invariant using grid homology techniques. We also compute $\varepsilon$ of $(p,q)$ torus knots and prove that $\varepsilon(\Grid_+)=1$ if $\Grid_+$ is a grid diagram for a positive braid. Furthermore, we show how $\varepsilon$ behaves under $(p,q)$-cabling of negative torus knots.
\end{abstract}

\maketitle

\section{Introduction} 

While a significant progress has been made in recent years, it still remains largely open in low-dimensional topology and knot theory to understand the structure of the knot concordance group. Concordance invariants obtained from various versions of knot Floer homology have proved to be very efficient in further analyzing and understanding the structure of the concordance group. Ozsv\' ath and Szab\'o defined the concordance invariant $\tau$ in \cite{ozsvath4ballgenus} and prove that $\tau$ gives a lower bound for the 4-ball genus of a knot $K$. In the same paper, they prove several useful properties of $\tau$ and compute $\tau$ for certain family of knots. Another important concordance invariant $\Upsilon$ was defined in \cite{upsilondefn} by Stipsicz, Ozsv\' ath and Szab\'o. Using $\Upsilon$, they prove that $\mathcal{C}_{TS}$, which is the subgroup of the smooth concordance group generated by topologically slice knots, contains a $\mathbb{Z}^{\infty}$ summand. Finally, we turn our focus to another concordance invariant $\varepsilon$ that is defined using $CFK^{\infty}(K)$ by Hom in \cite{hom2011knot}. In \cite{hom2014bordered}, Hom uses $\varepsilon$ to calculate the $\tau$ invariant of $(p,q)$ torus knots and $(p,q)$ cables of a knot $K$. \vspace{0.1cm}

Our main goal in this paper is to provide a combinatorial description for $\varepsilon$ using grid homology, compute its value for $(p,q)$ torus knots and positive braids, and finally examine its behavior under $(p,q)$ cabling on torus knots.\footnote{Even though it was not written explicitly
anywhere, the result about positive braids also follows from \cite{positivebraid} and \cite{surveyhomconcordance}. We reprove these results only using grid homology.} 

In an upcoming work, we generalize $\varepsilon$ for knots in lens spaces using the grid description defined by Baker-Grigsby-Hedden in \cite{lensspace}. We will also examine the effect of $(p,q)$-cabling on $\varepsilon$ for any non-trivial knot $K \subset S^3$ and prove similar results for $K \subset L(p,q)$.

The combinatorial treatment of knot Floer homology, namely grid homology, proved to be a valuable tool, especially for computational purposes. Initially defined by Manolescu-Ozsv\' ath-Sarkar in \cite{manolescu2009combinatorial}, it was used to give a simpler proof of the Milnor conjecture \cite{ozsvath2015grid}, to study transverse and Legendrian knots in $S^3$ (\cite{legtransversegrid}, \cite{transversesimplicitygrid}), as well as to compute the concordance invariant $\tau$ for several knots and knot families \cite{gridtau}. Following this idea, we will use grid diagrams to give a description for the concordance invariant $\varepsilon$. Let $\mathbb{G}$ denote a grid diagram for a knot $K$, then the first main result can be stated as follows;

\begin{Th} \label{main} $\varepsilon(\mathbb{G})$ defined via grid homology is a concordance invariant, in the sense that it satisfies the following properties;
\begin{enumerate}[label=\alph*)]
    \item If $K$ is slice, then $\varepsilon(\mathbb{G}) = 0$,
    \item $\varepsilon(\shortminus \mathbb{G}) = \shortminus \varepsilon(\mathbb{G})$ where $\shortminus \mathbb{G}$ denotes the diagram for the mirror reverse of $K$,
    \item If $\varepsilon (\mathbb{G}_1)= \varepsilon (\mathbb{G}_2)$ for some knots $K_1$ and $K_2$, then $\varepsilon(\mathbb{G}_1 \# \mathbb{G}_2)=\varepsilon(\mathbb{G}_1)$.
    \item If $\varepsilon(\mathbb{G}_1)=0$, then $\varepsilon(\mathbb{G}_1 \# \mathbb{G}_2)= \varepsilon(\mathbb{G}_2)$.
\end{enumerate}
\end{Th}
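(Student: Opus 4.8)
\medskip
\noindent\textbf{Plan of proof.} The strategy is to carry the structural input behind Hom's original arguments in \cite{hom2011knot} into the grid setting. Recall that the grid complex $GC^-(\mathbb{G})$ carries two commuting filtrations --- by Alexander grading and by $U$-power --- so that it has a \emph{vertical} and a \emph{horizontal} subquotient complex, each equipped with a distinguished generator, and $\varepsilon(\mathbb{G})$ is read off from the position of the distinguished vertical generator $x_0$ relative to the horizontal differential in a vertically, resp.\ horizontally, simplified basis. Properties (b), (c) and (d) will be purely algebraic consequences of a duality and of a tensor-product formula for grid complexes, whereas (a) will require one Floer-theoretic input.

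For (b), passing from $\mathbb{G}$ to the diagram $\shortminus\mathbb{G}$ of the mirror reverse interchanges the $\mathbb{X}$- and $\mathbb{O}$-markings, which I will show interchanges the two filtrations and passes to the dual complex; that is, $GC^-(\shortminus\mathbb{G})$ is filtered chain homotopy equivalent to the dual of $GC^-(\mathbb{G})$ with its two filtrations swapped. Under this operation a vertically simplified basis becomes a horizontally simplified one, the distinguished vertical and horizontal generators exchange roles, and the three configurations recorded by $\varepsilon$ are permuted by the transposition fixing $0$ and interchanging $1$ with $-1$; hence $\varepsilon(\shortminus\mathbb{G})=\shortminus\varepsilon(\mathbb{G})$.

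For (c) and (d), there is a standard grid diagram $\mathbb{G}_1\#\mathbb{G}_2$ for the connected sum together with a filtered chain homotopy equivalence $GC^-(\mathbb{G}_1\#\mathbb{G}_2)\simeq GC^-(\mathbb{G}_1)\otimes GC^-(\mathbb{G}_2)$ compatible with both filtrations. In the tensor product the distinguished vertical generator is $x_0^{(1)}\otimes x_0^{(2)}$, and the Leibniz rule expresses its horizontal differential in terms of those of $x_0^{(1)}$ and $x_0^{(2)}$. A short case analysis in the simplified bases, following the connected-sum lemmas of \cite{hom2011knot}, then shows: if $\varepsilon(\mathbb{G}_1)=\varepsilon(\mathbb{G}_2)=1$ the class $x_0^{(1)}\otimes x_0^{(2)}$ falls into the $\varepsilon=1$ configuration; if $\varepsilon(\mathbb{G}_1)=\varepsilon(\mathbb{G}_2)=-1$ it falls into the $\varepsilon=-1$ configuration; and if $\varepsilon(\mathbb{G}_1)=0$ then the first tensor factor behaves as a unit and the configuration of the second factor is inherited. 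The first two cases, together with the symmetric version of the third, give (c), and the third gives (d).

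For (a), which I expect to be the main obstacle, grid homology by itself does not detect smooth sliceness, as it carries no intrinsic cobordism maps. The plan is to invoke that the filtered chain homotopy type of the knot Floer complex is a concordance invariant up to stable equivalence \cite{hom2011knot}, together with the identification of grid homology with knot Floer homology, so that a slice disk for $K$ makes $GC^-(\mathbb{G})$ stably equivalent to the grid complex of the unknot. The description used for (c) shows that $\varepsilon$ is unchanged by adjoining an acyclic summand, and $\varepsilon$ of the unknot is $0$, so $\varepsilon(\mathbb{G})=0$. Finally, combining (a)--(d) with the fact that $\varepsilon$ takes values in $\{-1,0,1\}$, a brief case analysis applied to $K_1\#(\shortminus K_2)$ for concordant $K_1,K_2$ shows that $\varepsilon$ is then unchanged, which is the sense in which $\varepsilon(\mathbb{G})$ is asserted to be a concordance invariant.
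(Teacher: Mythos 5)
Your outline is essentially correct, but it routes almost everything through Hom's holomorphic arguments in \cite{hom2011knot}, whereas the paper's proofs are intended to be intrinsic to the grid setting, and the two diverge most in parts (a), (c) and (d). For (a), your premise that grid homology ``carries no intrinsic cobordism maps'' is not accurate: the combinatorial theory does have maps associated to saddle moves and to adding unknotted split components (Propositions 2.6.11 and 8.3.1 and Lemma 8.4.2 of \cite{ozsvath2015grid}), and the paper uses exactly these --- decomposing a concordance into saddle moves and tracking the generator $\alpha$ of $cGH^-/\mathrm{Tors}$ through the maps $\sigma,\mu$ with $\mu\circ\sigma=U$ --- to show the distinguished class survives into $GH^-(\shortminus\mathbb{G})$. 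Your alternative, importing stable equivalence of $CFK^\infty$ under concordance and the identification of grid homology with knot Floer homology, is logically sound but silently assumes the equivalence of the grid-defined $\varepsilon(\mathbb{G})$ with Hom's $\varepsilon(K)$; if you take that identification as given, parts (a)--(d) all follow from \cite{hom2011knot} with no further work, so the content of the theorem in this paper is precisely to avoid that step. For (c) and (d), you invoke a filtered K\"unneth decomposition $GC^-(\mathbb{G}_1\#\mathbb{G}_2)\simeq GC^-(\mathbb{G}_1)\otimes GC^-(\mathbb{G}_2)$ and Hom's case analysis in simplified bases; the paper instead builds an explicit grid diagram for the connected sum by a marking swap realizing a saddle move, proves additivity of the Alexander grading of product states directly, and shows by decomposing rectangles into ``grid representatives'' in $\mathbb{G}_1$ and $\mathbb{G}_2$ that $\mathbf{x}_{\tau_1}\cup\mathbf{x}_{\tau_2}$ generates the free part, after which the Leibniz-type computation of $\partial^-_{\shortminus\mathbb{G}}$ on this class gives the case analysis. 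Your route is cleaner where the K\"unneth formula is available, while the paper's buys a diagram-level argument that stays inside the combinatorial framework. Part (b) is the one place where the two proofs essentially coincide, both resting on the duality $GH^-(\shortminus\mathbb{G})\simeq GH^-(\mathbb{G})^*$ and the exchange of the two filtrations. Your closing reduction of concordance invariance to the four properties via $K_1\#(\shortminus K_2)$ is correct and is the same (implicit) deduction the paper makes.
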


After establishing the above properties which immediately imply that this is a concordance invariant, we return to some computations. We have the following result for negative torus knots in $S^3$, 

\begin{Th} \label{torus}
Let $\mathbb{G}_{-p,q}$ denote the standard grid diagram for the negative $(-p,q)$-torus knot with grid index equal to $|p|+|q|$. Then;
\[ \varepsilon(\mathbb{G}_{-p,q})= \begin{cases} 
      -1 & q>1 \\
      0 & |q|=1 \\
      1 & q<-1 
   \end{cases}
\]

\end{Th}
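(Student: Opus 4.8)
The plan is to reduce the statement to the case of a positive torus knot using the symmetry properties from Theorem~\ref{main}, and then to extract $\varepsilon=1$ for such knots from the structure of their grid complex. First, when $|q|=1$ the $(-p,q)$-torus knot is the unknot, which is slice, so Theorem~\ref{main}(a) immediately gives $\varepsilon(\mathbb{G}_{-p,\pm1})=0$; here (as throughout) I use that $\varepsilon(\mathbb{G})$ depends only on the underlying knot, which is part of its definition.

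Assume now $p\ge 2$ and $|q|\ge 2$. For $q<-1$, the identity $T(a,b)=T(-a,-b)$ identifies the $(-p,q)$-torus knot with the \emph{positive} torus knot $T(p,|q|)$, i.e.\ the closure of the positive braid $(\sigma_1\cdots\sigma_{p-1})^{|q|}$; by the positive-braid theorem $\varepsilon(\mathbb{G}_+)=1$ together with diagram-independence, $\varepsilon(\mathbb{G}_{-p,q})=1$. For $q>1$, the $(-p,q)$-torus knot is the mirror reverse of the positive torus knot $T(p,q)$, so the mirrored grid $-\mathbb{G}_{-p,q}$ represents $T(p,q)$; by the previous case $\varepsilon(-\mathbb{G}_{-p,q})=1$, and hence $\varepsilon(\mathbb{G}_{-p,q})=-\varepsilon(-\mathbb{G}_{-p,q})=-1$ by Theorem~\ref{main}(b).

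The real content is therefore the input $\varepsilon(\mathbb{G}_+)=1$, and if one wishes to keep the proof of Theorem~\ref{torus} self-contained one can instead argue directly: for a positive torus knot the grid complex of the standard grid diagram is chain homotopy equivalent to a staircase complex (reflecting that positive torus knots are $L$-space knots), and the grid analogue of Hom's criterion applied to a staircase gives $\varepsilon=+1$ — and, applied to its dual, which is the grid complex of the mirror, gives $\varepsilon=-1$, handling the $q>1$ case without the mirror step. Either way, the crux is the grid-complex computation: one must pin down a cycle representing the generator of $\widehat{GH}$ and show that the two natural chain maps emanating from it — the grid versions of Hom's vertical and horizontal maps — behave asymmetrically (one trivial on homology, the other not), which is exactly what forces $\varepsilon=\pm1$ rather than $0$. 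Exhibiting this asymmetry from the explicit combinatorics of the torus-knot grid is the main obstacle; the torus-knot identities and the diagram-independence of $\varepsilon$ used above are routine.
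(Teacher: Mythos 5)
Your reduction of the three cases to one another is sound in outline and partly mirrors the paper (the paper also handles $q<-1$ by observing that $\mathbb{G}_{p,-q}$ is $\shortminus\mathbb{G}_{p,q}$ and invoking Theorem~\ref{main}(b); your $|q|=1$ case via sliceness of the unknot is a legitimate alternative to the paper's direct observation that $\mathbf{x}^+$ remains a cycle in $\shortminus\mathbb{G}_{-p,1}$). But the core of the theorem --- actually establishing $\varepsilon=\pm1$ for a nontrivial torus knot --- is not proved in your proposal. Your first route outsources it to $\varepsilon(\mathbb{G}_+)=1$ for positive braids together with the claim that $\varepsilon(\mathbb{G})$ ``depends only on the underlying knot, which is part of its definition.'' That is not part of the definition in this setting: $\varepsilon(\mathbb{G})$ is defined from a specific grid diagram via the generator of $GH^-(\mathbb{G})/\mathrm{Tors}$ and the differential $\partial^-_{\shortminus\mathbb{G}}$, and the paper deliberately phrases Theorem~\ref{torus} for the \emph{standard} diagram $\mathbb{G}_{-p,q}$; diagram-independence (i.e.\ invariance under grid moves of the kernel/image conditions, or equivalence with Hom's $\varepsilon(K)$) is nowhere established and cannot be assumed. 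Without it you cannot transfer the value computed on a positive-braid grid of $T(p,|q|)$ to the standard torus-knot grid. (There is also a structural issue: the positive-braid theorem is proved later and by a separate local-picture argument, so your proof inverts the paper's logical order even if it is not strictly circular.)

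Your second, ``self-contained'' route is only a sketch: you say the grid complex is homotopy equivalent to a staircase and that Hom's criterion applied to the staircase gives $\varepsilon=\pm1$, but you explicitly concede that exhibiting the required asymmetry from the combinatorics ``is the main obstacle.'' That obstacle is precisely the content of the paper's proof: one tracks the canonical non-torsion cycle $\mathbf{x}^+$ (upper-right corners of the $\Xmark$-markings), which after reflecting and swapping markings becomes the grid state $\mathbf{o}_+$ of lower-right corners of $\Omark$-markings, and then exhibits an explicit grid state $\mathbf{y}$ differing from $\mathbf{o}_+$ in two consecutive coordinates whose only surviving boundary term is a $1\times1$ rectangle covering a single $\Omark$, so that $\partial^-_{\shortminus\mathbb{G}}\mathbf{y}=U\cdot\mathbf{o}_+$ and hence $[\mathbf{x}_\tau]\in\mathrm{Im}(\partial^-_{\shortminus\mathbb{G}})$, giving $\varepsilon=-1$ for $q>1$. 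You need to supply this explicit computation (or a genuine proof of diagram-independence) for the argument to close.
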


For the theorem above, we will use the standard grid diagram for negative torus knots used in \cite{ozsvath2015grid} in the proof of Milnor conjecture. In \cite{hom2014bordered}, bordered Heegaard Floer homology is used to prove the behaviour of $\varepsilon$ under $(p,q)$-cabling. We will use grid homology to prove a similar result, yet for only $(p,q)$-cables of negative torus knots. 

\begin{Th} \label{cable}
Let $\mathbb{G}$ denote the grid diagram for $(-p',q')$ torus knot and let $\mathbb{K}_{p,q}$ be a grid diagram for a $(p,q)$-cable of $\mathbb{G}$. Then;
\begin{itemize}
    \item If $\varepsilon(\mathbb{G})\neq 0$, then $\varepsilon({\mathbb{K}_{p,q}})= \varepsilon(\mathbb{G})$.
    \item If $\varepsilon({\mathbb{G}}) = 0$, then $\varepsilon({\mathbb{K}_{p,q}})= \varepsilon(\mathbb{G}_{p,q})$.
\end{itemize}
\end{Th}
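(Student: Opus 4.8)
The plan is to produce an explicit grid diagram for the cable, dispose of the case $\varepsilon(\mathbb{G})=0$ using Theorem~\ref{torus}, reduce the remaining case to negative torus knots by a mirroring argument, and then compute the combinatorial $\varepsilon$ of the cable grid directly from the staircase grid of the negative torus knot. Concretely, let $\mathbb{K}_{p,q}$ be the grid diagram obtained from $\mathbb{G}$ by the standard cabling operation on grid diagrams: replace each strand of $\mathbb{G}$ by $p$ parallel strands following the grid framing, and then insert a local twist region to realize the framing $q$. First suppose $\varepsilon(\mathbb{G})=0$. By Theorem~\ref{torus} this happens precisely when $|q'|=1$, i.e.\ when $\mathbb{G}$ represents the unknot; hence $\mathbb{K}_{p,q}$ represents the $(p,q)$-torus knot. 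Since the combinatorial invariant $\varepsilon$ depends only on the underlying knot, $\varepsilon(\mathbb{K}_{p,q})=\varepsilon(\mathbb{G}_{p,q})$, which is the second bullet.

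Now suppose $\varepsilon(\mathbb{G})\neq 0$. By Theorem~\ref{torus} we then have $\varepsilon(\mathbb{G})\in\{\pm 1\}$ and $\mathbb{G}$ represents a nontrivial torus knot. Using Theorem~\ref{main}(b), the mirror identity $m(K_{p,q})=m(K)_{p,-q}$, the compatibility of the cabling operation with reflection of grid diagrams, and the fact (again from Theorem~\ref{torus}) that the mirror of a negative torus knot with $\varepsilon=1$ is a negative torus knot with $\varepsilon=-1$, it suffices to treat the case $\varepsilon(\mathbb{G})=-1$, i.e.\ $\mathbb{G}=\mathbb{G}_{-a,b}$ with $b>1$, and to prove $\varepsilon(\mathbb{K}_{p,q})=-1$ for every admissible $(p,q)$, allowing $q$ of either sign. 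For this I would start from the staircase grid diagram of $\mathbb{G}_{-a,b}$ used in the proof of the Milnor conjecture, whose grid complex $GC^-$ has an explicit staircase form, apply the cabling operation, and analyze $GC^-(\mathbb{K}_{p,q})$: the rectangles contributing to the differential, both in the $p$-fold region and in the twist region, are heavily constrained by the staircase shape, so repeated application of the cancellation lemma (Gaussian elimination) reduces $GC^-(\mathbb{K}_{p,q})$, as a filtered complex, to a small model. Evaluating on this model the two maps that define $\varepsilon(\mathbb{K}_{p,q})$ (the combinatorial analogues of Hom's $v_0$ and $h_0$) and comparing with the $\varepsilon=-1$ pattern then yields $\varepsilon(\mathbb{K}_{p,q})=-1=\varepsilon(\mathbb{G})$.

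The main obstacle is this last filtered reduction: one must track the Alexander and Maslov gradings through every cancellation and verify that the small model faithfully records the maps $v_0$ and $h_0$, not merely the isomorphism type of $GH^-$, since $\varepsilon$ detects the filtered chain homotopy type rather than the homology alone. An alternative I would fall back on is to first establish combinatorially that $\varepsilon(\mathbb{G})=-1$ forces $GC^-(\mathbb{G})$ to be filtered homotopy equivalent to a negative staircase complex together with acyclic box summands, observe that the boxes affect neither $\varepsilon$ nor the outcome of the cabling operation up to filtered homotopy, and carry out the reduction only for the staircase summand; this keeps the bookkeeping minimal while leaving the rest of the argument intact. In either approach, the sign in the final answer is ultimately traced back to the classification in Theorem~\ref{torus}.
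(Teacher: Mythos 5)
Your overall skeleton matches the paper's: the same block-replacement cabling construction on the grid, the same disposal of the $\varepsilon(\mathbb{G})=0$ case via Theorem \ref{torus} (the knot is the unknot, so the cable is a torus knot), and the same reduction by mirror symmetry to the case $\varepsilon(\mathbb{G})=-1$. But the heart of the theorem --- actually showing $\varepsilon(\mathbb{K}_{p,q})=-1$ when $\mathbb{G}=\mathbb{G}_{-a,b}$ with $b>1$ --- is not proved in your proposal; it is only announced. You say that repeated Gaussian elimination ``reduces $GC^-(\mathbb{K}_{p,q})$, as a filtered complex, to a small model'' and that evaluating $v_0$ and $h_0$ on that model gives the answer, and then you yourself identify the verification of this reduction as the main obstacle without resolving it. That reduction is a genuinely large computation: the cabled grid has $(rn)!$ generators, and nothing in the staircase shape of the underlying torus-knot grid makes the cancellation tree of the cabled complex transparent. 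Your fallback is also shaky as stated: the claim that $\varepsilon=-1$ forces $GC^-$ to be filtered homotopy equivalent to a staircase plus acyclic boxes is false for general knots (it holds for torus knots because they are L-space knots, but that is an input you would have to justify combinatorially, which is again nontrivial).

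The paper sidesteps all of this by exploiting the combinatorial definition of $\varepsilon$ directly: to show $\varepsilon=-1$ one only needs to exhibit, in the reflected-and-relabelled diagram, a single grid state $\mathbf{y}$ whose differential is exactly $U\cdot\mathbf{o}_+$, where $\mathbf{o}_+$ is the image of the distinguished non-torsion generator $\mathbf{x}^+$ (which remains the top-Alexander-grading non-torsion cycle after cabling by Proposition 6.4.8 of \cite{ozsvath2015grid}). After commutation moves rearrange the $r$-clusters of $\mathbb{O}$-markings anti-diagonally, such a $\mathbf{y}$ and the single contributing rectangle are visible by inspection, and no global simplification of the complex is needed. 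If you want to salvage your route, you should either carry out the small-model reduction explicitly or replace it with this kind of one-generator, one-rectangle argument. Separately, note that the grid cabling construction with one full twist only realizes the framings $rn\pm 1$; passing to arbitrary cabling parameters requires an additional interpolation argument (the paper invokes Van Cott's work for this), which your proposal glosses over when it asserts that the twist region ``realizes the framing $q$.''
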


Furthermore, we extend our results to positive braids and show that any positive braid has $\varepsilon$ equal to 1. This result relies on positive braids being fibered which is proved by Stallings in more generality in \cite{braidfibered}, for all homogeneous braids, along with Hedden's result in \cite{positivebraid} where he proves $\tau(K)=g$ for fibered, strongly quasipositive knots in $S^3$. We specifically appeal to Proposition 2.1 in \cite{positivebraid} and equivalent statements provided there. As a result, we have the following theorem,

\begin{Th} \label{braid}
Let $\mathbb{G}_+$ denote the grid diagram for a positive braid $B$, then $\varepsilon(\mathbb{G}_+) =1$.
\end{Th}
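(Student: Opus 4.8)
The plan is to reduce Theorem~\ref{braid} to the fact that $\tau$ detects the genus of the closure $K=\widehat{B}$, and then feed this — together with fiberedness — into the combinatorial definition of $\varepsilon$. First I would dispose of a degenerate case: if $B$ has $n$ strands and $n-1$ crossings (every band used exactly once) then $\widehat{B}$ is the unknot, for which $\varepsilon=0$; so we take the standing convention, implicit in the statement, that $K=\widehat{B}$ is a non-trivial knot, equivalently that the Bennequin surface $\Sigma$ of $B$ has genus $g\ge 1$. By Stallings~\cite{braidfibered}, $\Sigma$ is a fibre surface, so $K$ is fibered of Seifert genus $g=g(K)$. Moreover $\Sigma$ exhibits $K$ as a strongly quasipositive knot, so Proposition~2.1 of~\cite{positivebraid} (and the equivalent reformulations given there) applies and yields $\tau(K)=g(K)=g$.

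Next I would transfer this to grid homology. By the (now standard) identification of the combinatorially defined $\tau(\mathbb{G})$ with the Heegaard Floer invariant $\tau(K)$, we get $\tau(\mathbb{G}_+)=g$. Fiberedness of $K$ forces the maximal Alexander grading in which $\widehat{GH}(\mathbb{G}_+)$ is non-zero to be $g$, and there $\widehat{GH}(\mathbb{G}_+,g)\cong\mathbb{F}$ is one-dimensional — this is the grid incarnation of $\widehat{HFK}(K,g)\cong\mathbb{F}$ for fibered $K$, and for a positive braid closure it is also visible from the Alexander polynomial being monic of degree $g$. Fix a cycle $c$ generating $\widehat{GH}(\mathbb{G}_+,g)$. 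Since $g$ is the top Alexander grading, $c$ admits no incoming differential in the Alexander-filtered (``vertical'') grid complex, and since $\tau(\mathbb{G}_+)=g$ the class $[c]$ also generates $\widehat{GH}(\mathbb{G}_+)\cong\mathbb{F}$; that is, the vertical comparison map used to define $\tau$ is already surjective at filtration level $g$, which is one half of the condition $\varepsilon(\mathbb{G}_+)=1$. For the other half, observe that the second (``horizontal'') filtration of $\mathbb{G}_+$ corresponds to the Alexander filtration of the mirror grid $\shortminus\mathbb{G}_+$, a grid diagram for the closure of a negative braid, for which $\tau(\shortminus\mathbb{G}_+)=-\tau(\mathbb{G}_+)=-g<0$. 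Hence the horizontal comparison map is \emph{not} surjective at level $g$, and one reads off $\varepsilon(\mathbb{G}_+)=1$ rather than $0$ or $-1$.

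The step I expect to be the main obstacle is this last deduction: proving, entirely within the grid complex, that $\dim_{\mathbb{F}}\widehat{GH}(\mathbb{G}_+,g)=1$ together with $\tau(\mathbb{G}_+)=g>0$ forces $\varepsilon(\mathbb{G}_+)=1$. This is the grid-homological analogue of a structural lemma of Hom~\cite{hom2011knot}, and carrying it out amounts to exhibiting the explicit cycle realizing the defining condition for $\varepsilon=1$ and checking that it is not a boundary; the Maslov-- and Alexander--grading bookkeeping for the generators of $\mathbb{G}_+$ is where the work concentrates. Alternatively one can import the conclusion through the equivalence of grid homology with knot Floer homology~\cite{manolescu2009combinatorial}. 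A minor cosmetic point is to record precisely which equivalent reformulation of Proposition~2.1 of~\cite{positivebraid} is invoked, along with the convention noted above that ``positive braid'' is understood to mean $g(K)>0$.
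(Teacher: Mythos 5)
Your route is genuinely different from the paper's. The paper shares your starting point --- Stallings' fiberedness of positive braids plus Hedden's Proposition~2.1 are used to identify the canonical grid state $\mathbf{x}_+$ as the non-torsion generator realizing $\tau$ --- but from there it argues by a direct local computation: it classifies the possible local pictures of positive crossings in a grid presenting a positive braid, reflects the diagram and swaps the markings, performs an $O{:}NW$ stabilization, and exhibits an explicit empty rectangle with nonzero $\mathbb{O}$-multiplicity and no $\mathbb{X}$-marking leaving the distinguished state, so that $[\mathbf{x}_{\tau}]\notin\mathrm{Ker}(\partial^-_{-\mathbb{G}})$ and $\varepsilon(\mathbb{G}_+)=1$ by the combinatorial definition. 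You instead reduce to the structural implication ``$\tau(K)=g(K)>0$ implies $\varepsilon(K)=1$.'' That implication is true, and your plan buys something the paper's does not: it applies verbatim to any knot with $\tau=g>0$, and your observation that the theorem silently excludes positive braid words with only $n-1$ crossings (whose closure is the unknot, with $\varepsilon=0$) is a genuine improvement on the statement as printed.

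The problem is that the decisive step is not actually carried out, and the one-line justification you give for it is not yet an argument. ``The horizontal comparison map is not surjective at level $g$'' does not by itself distinguish $\varepsilon=1$ from $\varepsilon=0$. What is needed is: since $A(x_{\tau})=\tau=g$ is the top Alexander grading, the distinguished vertical generator sits at the minimal $i$-level of the horizontal complex through it, hence is automatically a horizontal cycle; the generator of that horizontal homology ($\cong\mathbb{F}$) is only reached at a strictly larger $i$-level because $\tau(\shortminus K)=-g<0$, so the comparison map from the sublevel containing $x_{\tau}$ is the zero map; therefore $x_{\tau}$ is a horizontal \emph{boundary}, which is precisely Hom's condition for $\varepsilon=1$. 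You would then still have to translate this into the paper's chain-level grid criterion (non-membership in $\mathrm{Ker}(\partial^-_{-\mathbb{G}})$, with the sign convention flipped relative to Hom's), or else import the conclusion through the equivalence with knot Floer homology --- which works but abandons the combinatorial framework the theorem is meant to live in. As submitted, the proposal is a correct strategy with its central lemma asserted rather than proved.
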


This paper is structured in the following way; in Section 2, we revisit some definitions from grid homology and describe the method that allows us to compute $\varepsilon$ using grid homology. Section 3 contains the proof of Theorem \ref{main} as well as the method to construct the connected sum of knots using grid diagrams. In Section 4, we compute the value of $\varepsilon$ for negative torus knots and show how $\varepsilon$ behaves under $(p,q)$-cabling. In Section 5, we describe how we can view grid diagrams to represent braids and show that $\varepsilon=1$ for positive braids.

\vspace{0.5cm}

\section{Preliminaries on $\varepsilon(K)$ and Grid Homology:}

Grid homology was defined by Manolescu-Ozsv\' ath-Sarkar in \cite{manolescu2009combinatorial} for knots in $S^3$, using the nice diagrams introduced by Sarkar-Wang in \cite{sarkarwangnicediagrams}. As we will discuss further, the main advantage of grid homology is the simplification of the domains counted by the differential and the combinatorial nature of its setup, despite increasing the complexity of the knot Floer complex. Many different properties of the Grid Homology setting are discussed and extended in \cite{ozsvath2015grid} and we will be following the notation conventions described there. Before we define the combinatorial recipe for $\varepsilon(K)$, we will briefly recall the grid homology setup. 

To construct a grid diagram for a given a knot $K$ in $S^3$, we start by taking a piecewise linear approximation of $K$, then we apply some local modifications so that the vertical strands go over horizontal strands. The resulting diagram can be put on an $n \times n$ grid that is decorated with $\Xmark$ and $\Omark$-markings. Since we can apply these steps for any knot $K$ in $S^3$, any knot can be represented by a grid diagram. In Figure \ref{knottogrid}, an example of this procedure is shown for the Figure-8 knot.

\begin{figure}[h!]
\centering
    \includegraphics[width=12cm]{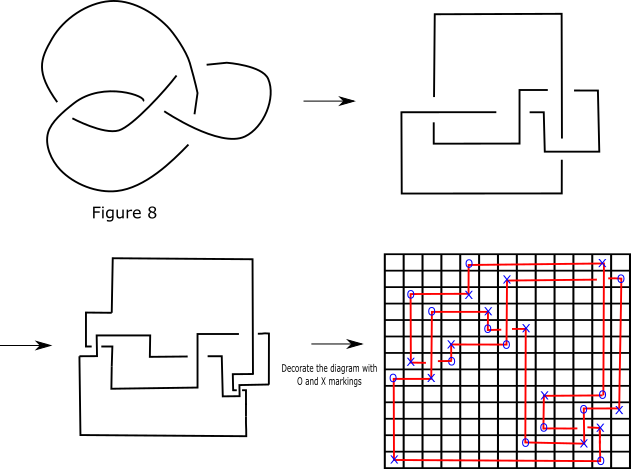}
\caption[PL approximation of a knot and how to obtain a grid diagram]{How to obtain a grid diagram for the Figure-8 knot. Notice that this diagram is $13\times 13$ and we can make it smaller by local isotopies, commutations and destabilizations}
\label{knottogrid}
\end{figure} 

\begin{Def} A grid diagram $\mathbb{G}$ for a knot $K$ in $S^3$ is given by the following data;
\begin{itemize}
    \item $n \times n$ grid decorated with a set of n X-markings and a set of n O-markings that we denote by $\mathbb{X}$ and $\mathbb{O}$, respectively.
    \item Each row and column contains exactly one O and one X marking.
    \item No double markings inside the small squares.
\end{itemize}

\end{Def}

To specify $K$, we draw vertical strands between the markings and orient them from X to O, and then draw the horizontal ones orienting the strands from O to X. We also follow the convention that vertical strands go over horizontal ones. This gives us the grid diagram $\mathbb{G}$ of $K$ with a specified orientation. We call the grid size $n$ the grid index. Furthermore, we identify the top and bottom edges, right-most and left-most edges of the grid diagram, and consider the \textit{toroidal} grid diagram along with the standard counterclockwise orientation on the torus.

\vspace{0.1cm}

A grid state $\mathbf{x}$ is a one-to-one correspondence between horizontal and vertical circles on the grid torus, equivalently an n-tuple $\mathbf{x}=\{x_1, x_2, \dotsc , x_n\}$ where each $x_i$ comes from the intersection points of horizontal and vertical circles. The set of grid states is denoted by $S(\mathbb{G})$. Now given $\mathbf{x}$, $\mathbf{y} \in S(\mathbb{G})$ such that they differ at exactly $n-2$ coordinates, we say $r$ is an empty rectangle from $\mathbf{x}$ to $\mathbf{y}$ and denote the set of empty rectangles by $r \in$ Rect$^0(\mathbf{x}, \mathbf{y})$ if; 
\begin{itemize}
    \item $Int(r) \cap \mathbf{x} = Int(r) \cap \mathbf{y} = \emptyset $ 
    \item $\partial (\partial_h)= \mathbf{y} - \mathbf{x}$ and  $\partial (\partial_v)= \mathbf{x} - \mathbf{y}$ where $\partial_h$ and $\partial_v$ denote horizontal and vertical boundaries respectively. 
\end{itemize}

\vspace{0.1cm}

For a given grid state $\mathbf{x} \in S(\Grid)$, we can define Maslov and Alexander gradings. Both of these gradings can be calculated by a function on the planar realization of the grid diagram. The candidate function is constructed in \cite{ozsvath2015grid} and calculates the Maslov grading as follows for $\mathbf{x} \in S(\mathbb{G})$;

$$ M_{\mathbb{O}} (\mathbf{x})= \mathcal{J}(\mathbf{x},\mathbf{x}) - 2\mathcal{J}(\mathbf{x}, \mathbb{O}) + \mathcal{J}(\mathbb{O}, \mathbb{O}) + 1$$

\vspace{0.2cm}

where $\mathcal{J}(A,B)$ is a symmetrization of the function $\mathcal{I}(A,B)$ which counts the number of elements of $B$ that lie in the North-East of the elements of $A$. Using this, we can also define the following Alexander grading function;

$$A(\mathbf{x})= \frac{1}{2}(M_{\mathbb{O}} (\mathbf{x}) -M_{\mathbb{X}} (\mathbf{x})) - (\frac{n-1}{2}) $$

\vspace{0.2cm}
where $n$ is the grid index. 
\vspace{0.2cm}

For our construction, we will consider a few different versions of the grid homology. Given an $n \times n$ grid diagram $\Grid$, enumerate the $\Omark$-markings for $i=1, \dots, n$ and consider the multipolynomial ring $\mathbb{F}[V_1, \dots, V_n]$ where each $V_i$ corresponds to an $O_i$. The \textit{unblocked grid chain complex} $GC^-(\Grid)$ is freely generated by the set of grid states over $\mathbb{F}[V_1, \dots, V_n]$ and equipped with the following differential,

\begin{equation} \partial^-_{\Xmark} \, \mathbf{x} = \sum\limits_{ \mathbf{y} \in S(\Grid)} 
\sum_{\substack{ \{ r \in Rect^o(\mathbf{x}, \mathbf{y}), \, \,  r \cap \Xmark = \emptyset \}  }} 
 V_1^{O_{1}(r)} \, V_2^{O_{2}(r)} \dots V_n^{O_{n}(r)} \cdot \mathbf{y} \label{differentialgrid}\end{equation}
 
 Multiplication by any $V_i$ viewed as a map from $GC^-(\Grid)$ to itself is of homogeneous degree $(-2,-1)$ which can be written as,
 
 \[ A(V_1^{n_1} \dots V_k^{n_k} \cdot \mathbf{x})= A(\mathbf{x}) - n_1 - \dots - n_k
\]
\[ M(V_1^{n_1} \dots V_k^{n_k} \cdot \mathbf{x})= M(\mathbf{x}) - 2n_1 - \dots - 2n_k
\]

The unblocked grid homology denoted by $GH^-(\Grid)$ is $H_*(GC^-(\Grid), \partial^-_{\Xmark})$. See the example in Figure \ref{gridchain}. In Section 3, we discuss how to construct a grid diagram for the connected sum of two knots from their disjoint union. As a result, we briefly describe a version of grid homology for links called \textit{collapsed grid homology} (See Section 8.2 in \cite{ozsvath2015grid} for further details). Given a grid diagram $\Grid$ for a link $L$ with $l$ components, choose one $V_{j_i}$ for $i=1, \dots , l$ corresponding to a component of $L$. The \textit{collapsed grid complex} is $cGC^-(L)= \frac{GC^-(L)}{V_{j_1} = \dots = V_{j_l}}$ equipped with the same differential in (\ref{differentialgrid}). Homology of this complex, denoted by $cGH^-(L)$, is viewed as a bigraded $\mathbb{F}[U]$-module which is a link invariant. Notice that we also normalize the Alexander grading, 

\begin{equation}
A(\mathbf{x})= \frac{1}{2}(M_{\mathbb{O}} (\mathbf{x}) -M_{\mathbb{X}} (\mathbf{x})) - (\frac{n-l}{2})
\label{alexandergridlink}
\end{equation}

where $n$ is the grid index and $l$ is the number of link components.
 
One other version of grid construction that we will utilize in this work corresponds to the knot Floer complex filtered by the Alexander filtration. This complex is defined in \cite{ozsvath2015grid} and modifies the differential so that we count more rectangles. 

Given a grid diagram $\Grid$ with grid index $n$, \textit{the filtered grid complex} $\mathcal{GC}^-(\Grid)$ is generated by the set of grid states over the polynomial ring $\mathbb{F}[V_1, \dots , V_n]$ and equipped with,

\[ \partial^- \, \mathbf{x} = \sum\limits_{ \mathbf{y} \in S(\Grid)} \, 
\sum_{  r \in Rect^o(\mathbf{x}, \mathbf{y})  } 
 V_1^{O_{1}(r)} \, V_2^{O_{2}(r)} \dots V_n^{O_{n}(r)} \cdot \mathbf{y} \]

Note that this particular differential counts the rectangles that contain $\Xmark$-markings as well. It is proved in \cite{ozsvath2015grid} that this map satisfies $(\partial^-)^2=0$ and respects the Alexander filtration. As a result, $(\mathcal{GC}^-(\Grid), \partial^-)$ is a $\mathbb{Z}$-filtered, $\mathbb{Z}$-graded chain complex over $\mathbb{F}[V_1, \dots, V_n]$.

A more detailed discussion of the properties of this complex such as the invariance of filtered quasi-isomorphism type and the invariants that can be obtained from this complex can be found in \cite{ozsvath2015grid}.

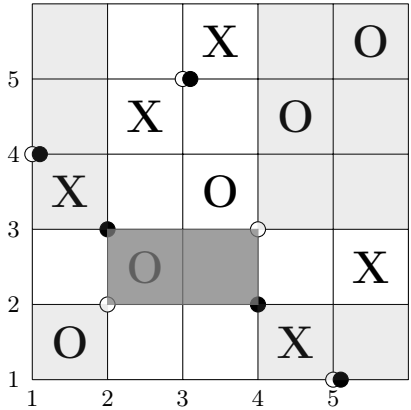
\begin{figure}[h]
\centering
\begin{tikzpicture}

\draw[step=1.00cm,color=black] (0,0) grid (5,5);
\node at (0.5 ,+0.5) { \Large{\textbf{O}}};
\node at (1.5 , 1.5) { \Large{\textbf{O}}};
\node at (2.5, 2.5) { \Large{\textbf{O}}};
\node at (3.5,3.5) { \Large{\textbf{O}}};
\node at (4.5,4.5) { \Large{\textbf{O}}};
\node at (0.5,2.5) { \Large{\textbf{X}}};
\node at (1.5,3.5) { \Large{\textbf{X}}};
\node at (2.5,4.5) { \Large{\textbf{X}}};
\node at (3.5,0.5) {\Large{\textbf{X}}};
\node at (4.5, 1.5) { \Large{\textbf{X}}};
\node at (0, -0.25) { \scriptsize{1}};
\node at (1, -0.25) { \scriptsize{2}};
\node at (2, -0.25) { \scriptsize{3}};
\node at (3, -0.25) { \scriptsize{4}};
\node at (4, -0.25) { \scriptsize{5}};
\node at (-0.25, 0) { \scriptsize{1}};
\node at (-0.25, 1) { \scriptsize{2}};
\node at (-0.25, 2) { \scriptsize{3}};
\node at (-0.25, 3) { \scriptsize{4}};
\node at (-0.25, 4) { \scriptsize{5}};
\draw [fill=white] (1,1) circle [radius=0.1];
\draw [fill=white] (2,4) circle [radius=0.1];
\draw [fill=white] (0,3) circle [radius=0.1];
\draw [fill=white] (3,2) circle [radius=0.1];
\draw [fill=white] (4,0) circle [radius=0.1];
\draw [fill=black] (1,2) circle [radius=0.1];
\draw [fill=black] (3,1) circle [radius=0.1];
\draw [fill=black] (0.1,3) circle [radius=0.1];
\draw [fill=black] (2.1,4) circle [radius=0.1];
\draw [fill=black] (4.1,0) circle [radius=0.1];
\draw [fill=gray,gray, opacity=0.75] (1,1) rectangle (3,2);
\draw [fill=gray,gray, opacity=0.15] (0,0) rectangle (1,1);
\draw [fill=gray,gray, opacity=0.15] (0,2) rectangle (1,5);
\draw [fill=gray,gray, opacity=0.15] (3,2) rectangle (5,5);
\draw [fill=gray,gray, opacity=0.15] (3,0) rectangle (5,1);
\end{tikzpicture}
\caption{Grid diagram for the left-handed trefoil (LHT), two grid states $\mathbf{x}$ and $\mathbf{y}$ are given by empty and full circles respectively. Darker shaded rectangle and lighter shaded rectangle are two rectangles from $\mathbf{x}$ to $\mathbf{y}$. Notice that darker shaded rectangle is counted by the differential whereas lighter shaded one is not since it is not an empty rectangle }
\label{gridchain}
\end{figure}

Now we can also define the concordance invariant $\tau$ which will be helpful for our discussions about $\varepsilon$. Since our aim is to provide a combinatorial description, we follow the definition from grid homology which is equivalent to the definition in the holomorphic theory. For a given knot $K$, let $\mathbb{G}$ be a grid diagram of it. We have the following definition; \newline 

$ \tau(K)=$ - max\{ $n \mid $ \textit{there exists a homogeneous, non-torsion element in} $GH^-(G)$ \textit{with Alexander grading equal to} $n$\}   \newline

We briefly recall the definition of $\varepsilon$ as in \cite{hom2011knot}. Given a knot $K$, consider $CFK^{\infty}(K)$ viewed as a $\mathbb{Z} \oplus \mathbb{Z}$-filtered complex on $(i,j)$-plane, equipped with the differential that counts Whitney disks that crosses both basepoints $z$ and $w$ (See \cite{ozsvath2004holomorphic} for further details). If an element $\mathbf{y}$ appears in the boundary of another element $\mathbf{x}$, we usually draw an arrow from $\mathbf{x}$ to $\mathbf{y}$ on the $(i,j)$-plane. 

Now let $\partial^{vert}$ denote the \textit{vertical differential}, that is the part of the differential of $CFK^{\infty}$ that preserves the $U$-filtration. Similarly, let $\partial^{horz}$ be the \textit{horizontal differential}, the part of the differential that preserves the Alexander filtration. Consider the subcomplex $C\{i=0\}=C^{vert}$ equipped with $\partial^{vert}$ and let $[\mathbf{x}]$ be a homology class that generates $H_*(C^{vert},\partial^{vert})$. We can view this class in the horizontal complex $C\{j=\tau\}=C^{horz}$ equipped with $\partial^{horz}$, and this leads to the definition of $\varepsilon$ as follows,

\begin{itemize}
    \item $\varepsilon(K)=0$ if and only if $[\mathbf{x}]$ is in the kernel, but not in the image of the horizontal differential.
    \item $\varepsilon(K)=1$ if and only if $[\mathbf{x}]$ is in the image of the horizontal differential.
    \item $\varepsilon(K)=-1$ if and only if $[\mathbf{x}]$ is not in the kernel of the horizontal differential.
\end{itemize}

Now, we can define our method to compute $\varepsilon$ using grid homology. 

\vspace{0.3cm}

\textbf{Step 1:} Given an $n\times n$ grid diagram $\Grid$ of $K$, enumerate the $\Omark$-markings for $i=1, \dots , n$ and recall that the rank of the free part of $GH^-(K)$ is 1. We start by choosing a generator $[x_{\tau}]$ of $GH^-(K)$. Notice that this element is not the same as the generator of $H_*(C\{i=0\})$, in fact $A(x_{\tau})=-A(x)$ where $[x]$ is the generator of $H_*(C\{i=0\})$.

\textbf{Step 2:} Instead of looking at the horizontal complex and seeing how the element $x_{\tau}$ interacts with it, we use the symmetry properties of grid homology. The reason we choose this method to compute $\varepsilon$ is to take advantage of the combinatorial nature of the grid homology computations and the fact that diagram based calculations are easier to track once we know which element we would like to follow. Firstly, reflect the grid diagram $\Grid$ with respect to a horizontal axis which yields a grid diagram for the mirror of $K$. We have the following proposition which shows that the generator of the free part is mapped to the generator of the free part of $GH^-(mr(K))$,

\begin{Prop}[\cite{ozsvath2015grid}, Proposition 7.4.3] If $K$ is a knot and $mr(K)$ denotes the mirror of $K$, then the homology of the grid complex of $mr(K)$ is isomorphic to the homology of the dual complex of $K$. More explicitly if $GH^-(K) \cong \mathbb{F}[U]_{(-2\tau,-\tau)} \oplus (Torsion)$ then, $GH^-(mr(K)) \cong \mathbb{F}[U]_{(2\tau,\tau)} \oplus (Torsion)$ where $\tau=\tau(K)$.
\end{Prop}

In the proof of the above proposition, the isomorphism is explicitly constructed with a bijection between grid states and empty rectangles between two diagrams before and after reflection, hence we can keep track of the distinguished element. As Hom explains in \cite{hom2011knot}, this dual complex can be seen geometrically on the filtered grid complex $\mathcal{GC}^-(\Grid)$ by reversing the direction of both filtrations and also reversing the direction of the arrows.

\textbf{Step 3:} After obtaining the mirror of $K$, we also change the orientation by switching the $\Xmark$ and $\Omark$-markings. The result is a grid diagram for the mirror reverse of $K$ which we denoted by $-K$ and the grid diagram that represents the mirror reverse is denoted by $-\Grid$. The effect of changing the orientation is given by the following proposition,

\begin{Prop} [\cite{ozsvath2015grid}, Proposition 7.1.1] \label{mirrorhat} If $K$ is a knot, for all $d,s \in \mathbb{Z}$, $\widehat{GH}_d(K,s) \cong \widehat{GH}_{d-2s}(K,-s)$ where $d$ is the Maslov (homological) grading and $s$ is the Alexander grading.
\end{Prop}

The symmetry above is observed by changing the orientation of the knot. After this orientation reversal, the distinguished element is mapped to the desired Alexander grading in the resulting complex. We also note here that $\widehat{GC}(K) = ( GC^-(K)/V_i \, , \partial^-_{\Xmark})$ is called \textit{simply blocked grid complex}. We describe the boundary map $\widehat{\partial}_{\Xmark,O_n}$ in the next step of our construction. The following lemma ensures that the method described here does not depend on the $O$-marking chosen in the next step.

\begin{Lemma} [\cite{ozsvath2015grid}, Lemma 4.6.9]
For any pair of integers $i,j \in \{ 1, \dots, n \}$, multiplication by $V_i$ is chain homotopic to multiplication by $V_j$ viewed as a map from $GH^-(K)$ to itself.
\end{Lemma}

\textbf{Step 4:} We start by looking at the complex $GC^{-}(\mathbb{G}, \partial^{-}_{\Xmark})$ and choosing a particular $O$-marking, call it $O_n$ We divide the boundary map $\partial^{-}_{\Xmark}$ into two parts where the first summand is $\widehat{\partial}_{\mathbb{X}, O_n}$ and the second summand counts empty rectangles for which $O_n(r) \neq 0$. More concretely,

\[ \partial^-_{\Xmark} \, \mathbf{x} = \sum\limits_{ \mathbf{y} \in S(\Grid)} 
\sum_{\substack{ \{ r \in Rect^o(\mathbf{x}, \mathbf{y}), \, \,  r \cap \Xmark = \emptyset \}  }} 
 V_1^{O_{1}(r)} \, V_2^{O_{2}(r)} \dots V_n^{O_{n}(r)} \cdot \mathbf{y}  \]
 
 \[= \sum\limits_{ \mathbf{y} \in S(\Grid)} 
\sum_{\substack{ \{ r \in Rect^o(\mathbf{x}, \mathbf{y}), \, \,  r \cap \Xmark = \emptyset \\ O_n(r) = 0 \}  }} 
 V_1^{O_{1}(r)} \, V_2^{O_{2}(r)} \dots V_{n-1}^{O_{n-1}(r)} \cdot \mathbf{y}   \]
 \[+ \sum\limits_{\mathbf{y} \in S(G)}  \sum_{ \substack{ \{r \in Rect^o(\mathbf{x},\mathbf{y}), \, r \cap \mathbb{X} = \emptyset, \\ O_n(r) \neq 0\}  } } V_1^{O_{1}(r)} \, V_2^{O_{2}(r)} \dots V_n^{O_{n}(r)} \, \cdot \mathbf{y}\]
 \[ = \widehat{\partial}_{\mathbb{X},O_n} (\mathbf{x}) + \partial_{\Xmark}^{horz}(\mathbf{x}) \]

Recall that in the setting of knot Floer homology, the horizontal complex $C\{j=\tau\}$ is equipped with the horizontal differential which preserves the Alexander grading. Any horizontal arrow changes the $i$-filtration which records the $U$-exponent. In comparison, we write the second summand above as the combinatorial version of the horizontal differential which preserves the Alexander filtration since $\Xmark$-markings are avoided and counts rectangles that contain non-zero $O_n$ multiplicities.

\textbf{Step 5:}  Finally, we can define $\varepsilon$ from the grid homology. Let $[x]$ denote the homology class that the distinguished homology class $[x_{\tau}]$ maps under the above isomorphisms. By Proposition \ref{mirrorhat}, $[x]$ is a non-zero element in $\widehat{GH}(-K)$. Now we consider $(\widehat{GH}(-K), \partial^{horz}_{\Xmark})$ and use that to define $\varepsilon$ combinatorially as follows,

\begin{Def} The above steps yield the following definition of $\varepsilon$,

\begin{itemize}
    \item $\varepsilon(\mathbb{G})=0$ if and only if $[\mathbf{x}] \in$ Ker($\partial_{\Xmark}^{horz}$), but $[\mathbf{x}] \not\in$ Im($\partial_{\Xmark}^{horz}$).
    \item $\varepsilon(\mathbb{G})=1$ if and only if $[\mathbf{x}] \not\in$ Ker($\partial_{\Xmark}^{horz}$).
    \item $\varepsilon(\mathbb{G})=-1$ if and only if $[\mathbf{x}] \in$ Im($\partial_{\Xmark}^{horz}$).
\end{itemize}

\end{Def}

 Following the commonly used nomenclature, we call $\mathbb{F}[U]$ the \textit{tower} part of the grid homology. Observe that an equivalent, alternative definition of $\varepsilon$ can be formulated as follows with the help of the structure of $GH^{-}(-K) \cong \mathbb{F}[U] \oplus Tors$. In particular, we will refer to this alternative definition in some of our computations.

\begin{Def}[Alternative Definition] Given a grid diagram $\Grid$ for a knot $K$,

\begin{itemize}
    \item $\varepsilon(\mathbb{G})=0$ if and only if $[\mathbf{x}]$ is a non-zero element in the \textit{tower} of $GH^{-}(-K)$.
    \item $\varepsilon(\mathbb{G})=1$ if and only if $[\mathbf{x}]$ is zero in $GH^{-}(-K)$.
    \item $\varepsilon(\mathbb{G})=-1$ if and only if $[\mathbf{x}]$ is non-zero, torsion element in $GH^{-}(-K)$.  
\end{itemize}

\end{Def}

Notice that this definition agrees with the definition of Hom in \cite{hom2011knot} with an extra minus sign introduced since we are doing the computations from the minus version of the grid homology. To emphasize that we are doing the computations using the grid diagrams, we denote the invariant as $\varepsilon(\Grid)$. Using the grid diagram for the mirror reverse of $K$ allows us to map the distinguished element to the appropriate Alexander grading where the computations become simpler.

\begin{figure}[h!]
\centering
\includegraphics[ width=0.75 \textwidth]{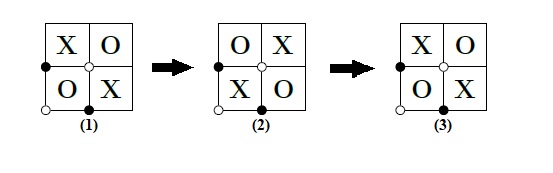}
\caption{From left to right, we start with the standard grid diagram for the unknot $\mathbb{G}_{\mathcal{O}}$, reflect it with respect to a horizontal axis and then change $\mathbb{X}$ and $\mathbb{O}$ markings to obtain $\shortminus \mathbb{G}_{\mathcal{O}}$ } \label{unknot}
\end{figure}

In Figure [\ref{unknot}], we see the example of the unknot $\mathbb{G}_{\mathcal{O}}$. Notice that the grid state given by full circles is a cycle since both rectangles leaving this grid state contain $\mathbb{X}$ markings, and this cycle generates the vertical homology. After applying the above definition to compute $\varepsilon$, we can immediately see that in $\shortminus \mathbb{G}_{\mathcal{O}}$, the same element is non-trivial. Hence, we can conclude that $\varepsilon(\mathbb{G}_{\mathcal{O}})=0$. 

Our goal is to prove that this combinatorial method described above provides a powerful and efficient way to compute $\varepsilon$. To prove the Theorems \ref{torus} and \ref{cable}, we use the standard grid diagram for $(-p,q)$-torus knots. 

To this end, we briefly recall the standard method of encoding any grid diagram using permutations. In the planar realization of a grid diagram, we enumerate the columns starting from left and enumerate the rows starting from the bottom, then we can represent the grid diagram using two permutations that are elements of $S_n$ if the grid index is $n$. These permutations are denoted by $\sigma_{\mathbb{X}}$ and $\sigma_{\mathbb{O}}$ recording the positions of $\mathbb{X}$ and $\mathbb{O}$-markings. We follow the convention that if a marking is in the $i^{th}$ column and $j^{th}$ row, then the permutation takes $i$ to $j$, and for simplicity, we only write $j$ in the permutation. 

In the case of a $(-p,q)$-torus knot, we can form a grid diagram denoted by $\mathbb{G}_{-p,q}$ with grid index $n=|p|+|q|$ and with permutations $\sigma_{\mathbb{O}}=(1, 2, 3, \dots , p+q)$ and $\sigma_{\mathbb{X}}=(p+1, p+2, \dots, p+q , 1, 2, \dots , p)$. See Figure [\ref{negativetorus}] for an example. 

\begin{figure}[h!] 
\centering
\begin{tikzpicture}
\draw[step=1.00cm,color=black] (0,0) grid (5,5);
\node at (0.5 , 0.5) { \Large{\textbf{O}}};
\node at (1.5 , 1.5) { \Large{\textbf{O}}};
\node at (2.5 , 2.5) { \Large{\textbf{O}}};
\node at (3.5 , 3.5) { \Large{\textbf{O}}};
\node at (4.5 , 4.5) { \Large{\textbf{O}}};
\node at (0.5 , 2.5) { \Large{\textbf{X}}};
\node at (1.5 , 3.5) { \Large{\textbf{X}}};
\node at (2.5 , 4.5) { \Large{\textbf{X}}};
\node at (3.5 , 0.5) { \Large{\textbf{X}}};
\node at (4.5 , 1.5) { \Large{\textbf{X}}};
\draw[red, thick] (0.5, 2.5) -- (0.5, 0.5);
\draw[red, thick] (1.5, 1.5) -- (1.5, 3.5);
\draw[red, thick] (2.5, 4.5) -- (2.5, 2.5);
\draw[red, thick] (3.5, 3.5) -- (3.5, 0.5);
\draw[red, thick] (4.5, 1.5) -- (4.5, 4.5);
\draw[red, thick] (2.5, 4.5) -- (4.5, 4.5);
\draw[red, thick] (0.5, 0.5) -- (3.5, 0.5);
\draw[red, thick] (1.5, 1.5) -- (3.25, 1.5);
\draw[red, thick] (3.75, 1.5) -- (4.5, 1.5);
\draw[red, thick] (0.5, 2.5) -- (1.25, 2.5);
\draw[red, thick] (1.75, 2.5) -- (2.5, 2.5);
\draw[red, thick] (1.5, 3.5) -- (2.25, 3.5);
\draw[red, thick] (2.75, 3.5) -- (3.5, 3.5);
\end{tikzpicture}
\caption{Grid diagram $\mathbb{G}_{-2,3}$ for the left-handed trefoil, grid index is 5 and $\sigma_{\mathbb{O}}=(1, 2, 3, 4, 5)$ and $\sigma_{\mathbb{X}}=(3, 4, 5, 1, 2)$ }
\label{negativetorus}
\end{figure}
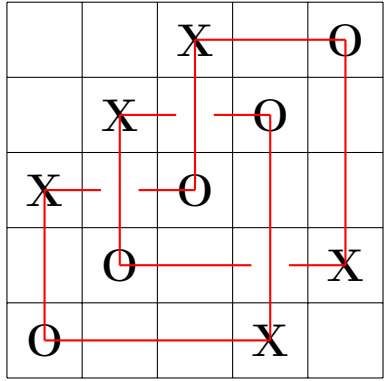

Figure [\ref{comparative}] draws a parallel picture between how we compute $\varepsilon$ using our grid homology method and how $\varepsilon$ can be computed from $HFK^{-}$ in the case of the left-handed trefoil, i.e. $T_{-2,3}$.

\begin{figure}[h!]
    \centering
    \includegraphics[width = 13cm]{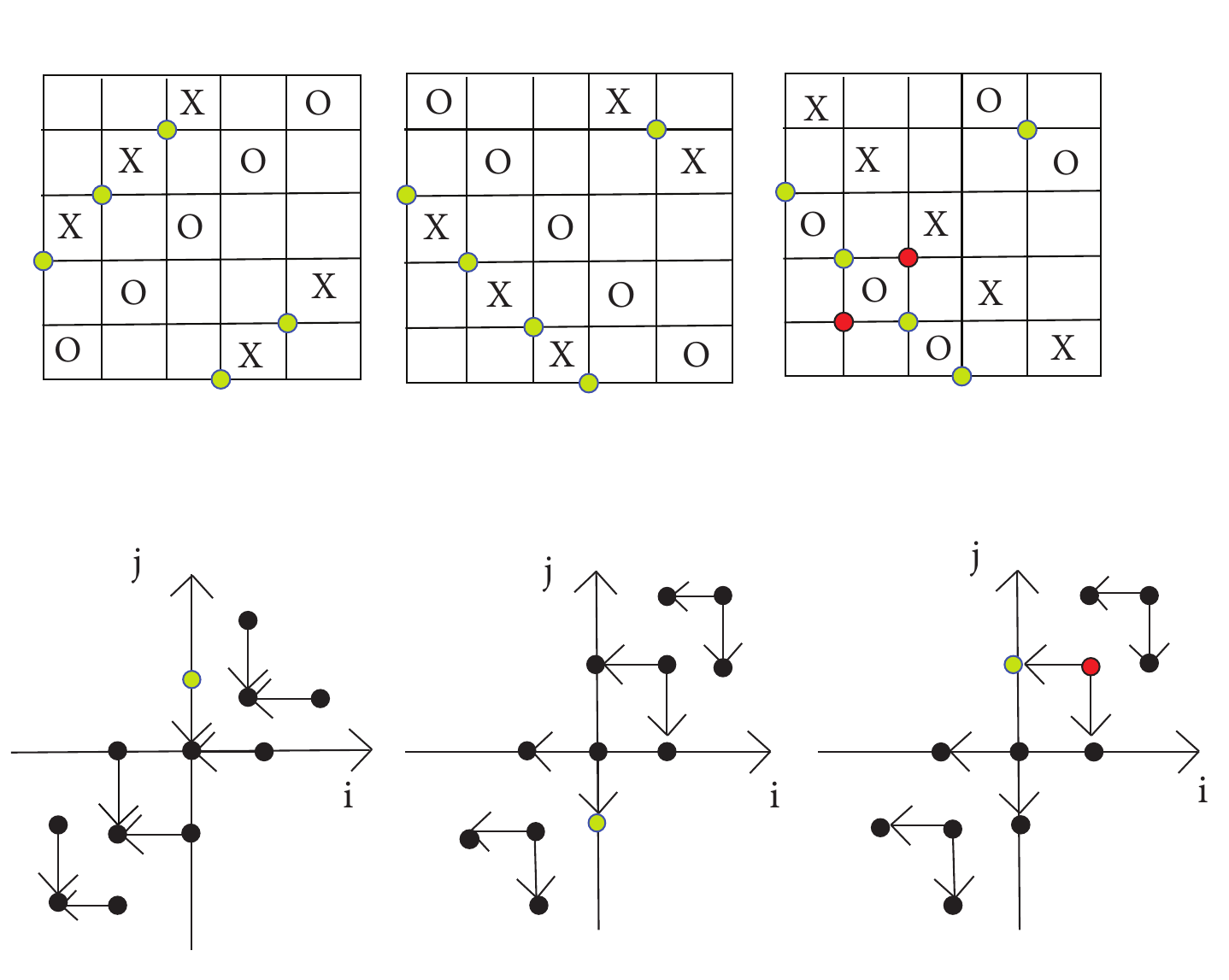}
    \caption{The first picture on the top is a grid diagram for $T_{-2,3}$, while the first picture in the second row shows a part of \textit{reduced} $CFK^{\infty}(T_{-2,3})$. The yellow dots in the grid picture indicate the grid state which is the generator of the $\mathbb{F}[U]$ part of $GH^{-}(T_{-2,3})$. The second and third pictures on top row show the effect of mirroring and interchanging the $\Xmark$- and $\Omark$-markings on the diagram, respectively. In the second row, the yellow dot represents the generator of the free part of $HFK^{-}(T_{-2,3})$ and the next two pictures show the effect of mirroring and orientation reversal of $T_{-2,3}$ on that generator. In the final picture of both rows, the element represented by red dots kills the distinguished homology class.}
    \label{comparative}
\end{figure}

\section{Concordance Properties of $\varepsilon$}

In this section, we prove that $\varepsilon$ is a knot concordance invariant. We make use of the combinatorial theory and some results from there to prove the properties of $\varepsilon$ listed in Theorem \ref{main}. 

Since we also need to consider connected sum of knots using grid diagrams, we define a method of constructing the grid diagram of connected sum of knots. This method allows us to carry the information of the individual knots to the larger diagram of the connected sum.

\begin{figure}[h!]
\centering
    \includegraphics[width=13cm]{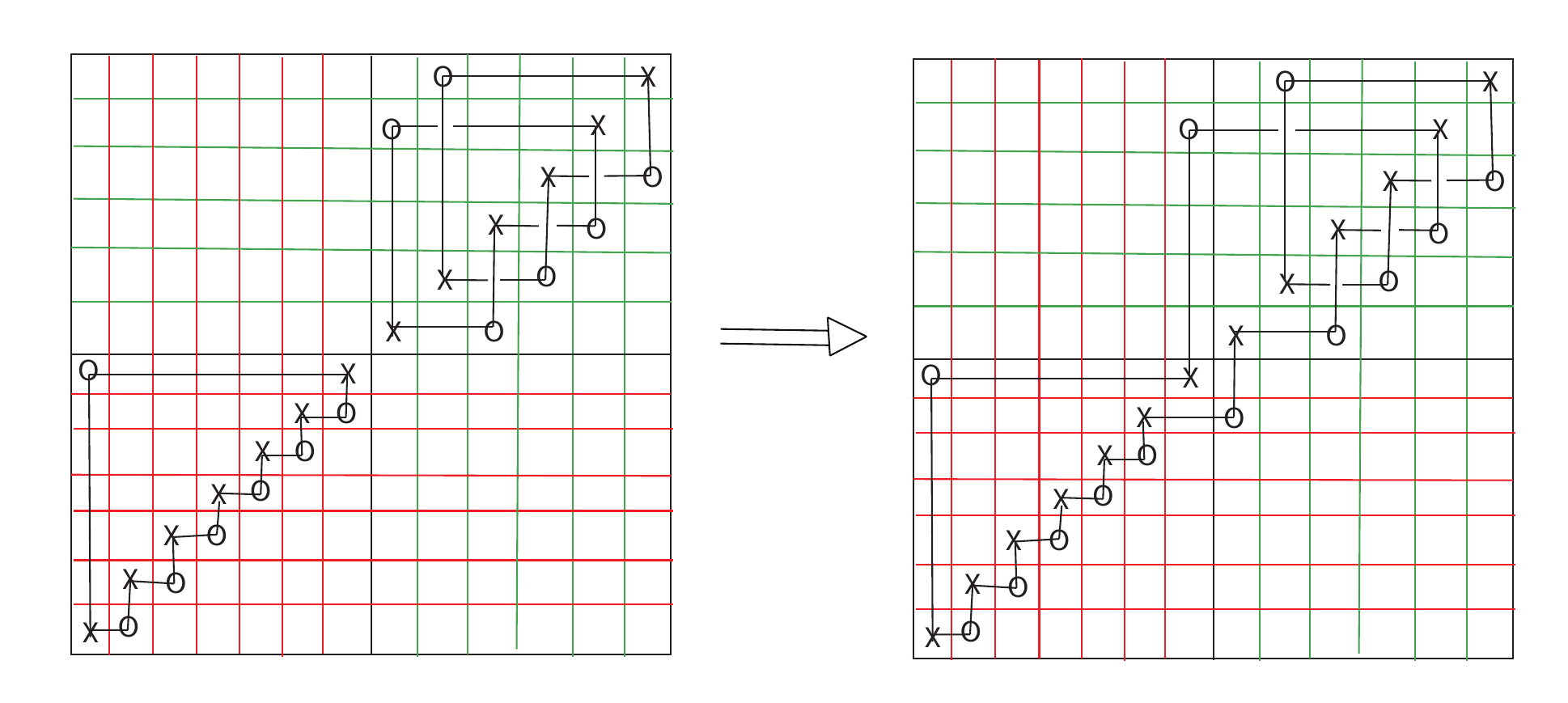}
    \caption{Two schematic diagrams showing how to form the connected sum from the disjoint union, on the left a grid diagram for the disjoint union of a link and the unknot, on the right a grid diagram for their connected sum}
    \label{saddle}
    \end{figure} 

Let $\mathbb{G}_{1}'$ and $\mathbb{G}_{2}'$ be two grid diagrams of the knots $K_1$ and $K_2$ with grid indices $n$ and $m$ respectively. By the definition of grid diagrams, each row and column contain an $\mathbb{X}$-marking. In $\mathbb{G}_1'$, cyclically permute the columns such that the $\mathbb{X}$-marking in the bottom row is in the left corner, and in $\mathbb{G}_2'$, cyclically permute the columns as well such that the $\mathbb{X}$-marking in the top row moves to the right corner. Call the resulting diagrams $\Grid_1$ and $\Grid_2$. After this step, place $\Grid_1$ to upper-right and $\Grid_2$ in the lower-left corner diagonally, and form a new grid diagram with grid index equal to $n+m$. Notice that this gives a grid diagram for $K_1 \sqcup K_2$. To produce a grid diagram for $K_1 \# K_2$, we interchange the $\mathbb{O}$-markings in the first column of $\Grid_1$ and the last column of $\Grid_2$ in the new diagram, cf. Figure [\ref{saddle}], while keeping the $\Xmark$-markings fixed. As discussed in \cite{ozsvath2015grid}, this is a grid realization of a saddle move which performs the connected sum operation. We denote the resulting diagram $\Grid_1 \# \Grid_2$. Furthermore, a grid state in $\Grid_1 \# \Grid_2$ has $n+m$ intersection points. Hence, if $\mathbf{x}_1 \in S(\Grid_1)$ and $\mathbf{x}_2 \in S(\Grid_2)$, we use $\mathbf{x}_1 \cup \mathbf{x}_2$ to denote the union of their coordinates on the resulting grid diagram. They will naturally produce a grid state in $\Grid_1 \# \Grid_2$ since top and right-most edges are omitted on the toroidal grid diagrams when specifying the grid states.

It is a standard result in the literature that for any knot $K$ in $S^3$, $K \# \shortminus K$ is slice and as a result, concordance invariants such as $\varepsilon$ vanish. Another advantage of the connected sum construction using grid diagrams is that we can directly see that $\varepsilon(\Grid \# \shortminus \Grid)=0$ where $\Grid$ is a grid diagram of $K$. As shown schematically in Figure \ref{connectslice}, we start by positioning the grid diagrams of $K$ and $\shortminus K$. To compute $\varepsilon$, we follow our construction and reflect the diagram with respect to a horizontal axis and change the markings. We immediately see that, we obtain a diagram for $K$ in the upper-left and a diagram for $\shortminus K$ in the lower-right of the diagram. By cyclically permuting one of $\Grid$ or $\shortminus \Grid$, we observe that we go back to the original diagram. This means that the homology class $[x]$ that we keep track of to compute $\varepsilon$ is mapped to itself in the final diagram. As a result, it is still the generator of the free part of $GH^-$ in the resulting diagram, which implies that $\varepsilon(\Grid \# \shortminus \Grid)=0$.

\begin{figure}[h!]
\centering
    \includegraphics[width=10cm]{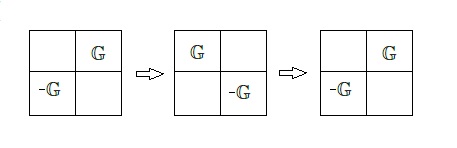}
    \caption{Schematic picture for the connected sum, reflected and cyclically permuted}
    \label{connectslice}
    \end{figure}

Notice that this construction increases the complexity of the chain complex, that is if $\Grid_1$ and $\Grid_2$ has $n!$ and $m!$ generators respectively, the connected sum has $(n+m)!$ generators. However, we have the following lemma which tells us that we can transfer these grid states preserving the additivity of Alexander grading.

\begin{Lemma} \label{3.1}
Given two grid diagrams $\Grid_1$ and $\Grid_2$ for $K_1$ and $K_2$, let $\Grid'=\Grid_1 \sqcup \Grid_2$ and $\Grid= \Grid_1 \# \Grid_2$ denote the grid diagrams of their disjoint union and connected sum constructed using above description. Let $\mathbf{x_1} \in S( \Grid_1)$, $\mathbf{x_2} \in S( \Grid_2)$ and $(\mathbf{x_1} \cup \mathbf{x_2})  \in S( \Grid_1 \# \Grid_2)$. Then $A(\mathbf{x_1} \cup \mathbf{x_2})= A(\mathbf{x_1})+A(\mathbf{x_2})$.
\end{Lemma}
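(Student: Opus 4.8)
The plan is to compute the Alexander grading via the Maslov-grading formula recalled in Section 2, namely $A(\mathbf{x}) = \tfrac{1}{2}\bigl(M_{\mathbb{O}}(\mathbf{x}) - M_{\mathbb{X}}(\mathbf{x})\bigr) - \tfrac{n-1}{2}$, and to track how each ingredient of this formula behaves when we pass from the disjoint-union diagram $\Grid'$ to the connected-sum diagram $\Grid$. Recall that $\Grid_1$ sits in the upper-right block and $\Grid_2$ in the lower-left block, so a grid state of $\Grid$ of the form $\mathbf{x}_1 \cup \mathbf{x}_2$ has all of $\mathbf{x}_1$'s intersection points in the upper-right block and all of $\mathbf{x}_2$'s in the lower-left block; likewise the $\mathbb{O}$- and $\mathbb{X}$-markings of $\Grid$ are partitioned into an upper-right set coming from $\Grid_1$ and a lower-left set coming from $\Grid_2$ — note that the $\mathbb{X}$-markings are unchanged under the connected-sum modification and the $\mathbb{O}$-markings are only swapped \emph{within} the respective diagrams' blocks in a way that does not move them outside those blocks. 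First I would write $M_{\mathbb{O}}(\mathbf{x}_1 \cup \mathbf{x}_2)$ using the formula $M_{\mathbb{O}}(\mathbf{x}) = \mathcal{J}(\mathbf{x},\mathbf{x}) - 2\mathcal{J}(\mathbf{x},\mathbb{O}) + \mathcal{J}(\mathbb{O},\mathbb{O}) + 1$ and expand each $\mathcal{J}$-term, which is built from $\mathcal{I}(A,B)$ counting pairs where an element of $B$ lies to the North-East of an element of $A$.

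The key observation is a block-decomposition of the counting function: for sets $A = A_1 \sqcup A_2$ and $B = B_1 \sqcup B_2$ where $A_1,B_1$ lie in the upper-right block and $A_2,B_2$ lie in the lower-left block, one has $\mathcal{I}(A,B) = \mathcal{I}(A_1,B_1) + \mathcal{I}(A_2,B_2) + (\text{cross terms})$, and the cross terms $\mathcal{I}(A_1,B_2)$, $\mathcal{I}(A_2,B_1)$ are \emph{constant}, depending only on the block sizes $n$ and $m$ and not on the particular states: a point in the lower-left block is never North-East of a point in the upper-right block (so one cross term vanishes), while every point in the upper-right block is North-East of every point in the lower-left block (so the other cross term equals $nm$ or a similar product of cardinalities). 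The same decomposition applies verbatim to the marking sets $\mathbb{O} = \mathbb{O}_1 \sqcup \mathbb{O}_2$ and $\mathbb{X} = \mathbb{X}_1 \sqcup \mathbb{X}_2$. After symmetrizing to get $\mathcal{J}$ from $\mathcal{I}$ (a correction involving cardinalities only), the upshot is that $M_{\mathbb{O}}(\mathbf{x}_1 \cup \mathbf{x}_2) = M_{\mathbb{O}_1}^{\Grid_1}(\mathbf{x}_1) + M_{\mathbb{O}_2}^{\Grid_2}(\mathbf{x}_2) + c_{\mathbb{O}}$ for a constant $c_{\mathbb{O}}$ depending only on $n,m$, and identically $M_{\mathbb{X}}(\mathbf{x}_1 \cup \mathbf{x}_2) = M_{\mathbb{X}_1}^{\Grid_1}(\mathbf{x}_1) + M_{\mathbb{X}_2}^{\Grid_2}(\mathbf{x}_2) + c_{\mathbb{X}}$ with the \emph{same} constant, $c_{\mathbb{O}} = c_{\mathbb{X}}$, because the block-size bookkeeping is insensitive to whether the markings are $\mathbb{O}$'s or $\mathbb{X}$'s. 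Therefore the difference $M_{\mathbb{O}} - M_{\mathbb{X}}$ is exactly additive: $(M_{\mathbb{O}} - M_{\mathbb{X}})(\mathbf{x}_1 \cup \mathbf{x}_2) = (M_{\mathbb{O}_1} - M_{\mathbb{X}_1})(\mathbf{x}_1) + (M_{\mathbb{O}_2} - M_{\mathbb{X}_2})(\mathbf{x}_2)$.

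Finally I would plug this into the Alexander formula. Using $A(\mathbf{x}_i) = \tfrac12(M_{\mathbb{O}_i} - M_{\mathbb{X}_i})(\mathbf{x}_i) - \tfrac{n_i - 1}{2}$ with $n_1 = n$, $n_2 = m$, and $A(\mathbf{x}_1 \cup \mathbf{x}_2) = \tfrac12(M_{\mathbb{O}} - M_{\mathbb{X}})(\mathbf{x}_1 \cup \mathbf{x}_2) - \tfrac{(n+m)-1}{2}$, the claimed identity $A(\mathbf{x}_1 \cup \mathbf{x}_2) = A(\mathbf{x}_1) + A(\mathbf{x}_2)$ reduces to the numerical check $\tfrac{(n+m)-1}{2} = \tfrac{n-1}{2} + \tfrac{m-1}{2} + \tfrac12$, which holds. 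I expect the main obstacle to be purely bookkeeping: carefully justifying the block-decomposition of $\mathcal{I}$ (in particular that the grid states of $\Grid$ of the form $\mathbf{x}_1 \cup \mathbf{x}_2$ genuinely split into the two diagonal blocks — this uses the cyclic column permutations made in the construction so that the blocks are positioned on the torus without overlap — and pinning down exactly which cross term vanishes and which is the full product), together with handling the passage from $\mathcal{I}$ to its symmetrization $\mathcal{J}$ so that the constants in $M_{\mathbb{O}}$ and $M_{\mathbb{X}}$ really do agree. An alternative, possibly cleaner, route is to invoke the stabilization/destabilization behaviour of the Alexander grading together with the fact that the saddle-move realization of connected sum is already discussed in \cite{ozsvath2015grid}, but I would prefer the direct computation since it keeps the argument self-contained.
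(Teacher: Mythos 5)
Your overall strategy --- expanding $M_{\mathbb{O}}$ and $M_{\mathbb{X}}$ via the $\mathcal{I}/\mathcal{J}$ counting functions and doing block bookkeeping --- is the same one the paper uses, but there is a genuine gap at the one place where the connected sum actually differs from the disjoint union. You assert that ``the $\mathbb{O}$-markings are only swapped within the respective diagrams' blocks in a way that does not move them outside those blocks.'' That is false: the connected-sum move interchanges the rows of the $\mathbb{O}$-marking in the first column of $\Grid_1$ and the $\mathbb{O}$-marking in the last column of $\Grid_2$, so after the swap each of these two markings has a column from one block and a row from the other --- they land in the two \emph{off-diagonal} blocks. Your clean decomposition $\mathbb{O}=\mathbb{O}_1\sqcup\mathbb{O}_2$ into diagonal blocks therefore fails for exactly these two markings, and the conclusion that $M_{\mathbb{O}}-M_{\mathbb{X}}$ is \emph{exactly} additive does not follow. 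Indeed it cannot be exactly additive: if it were, your own final computation would give $A(\mathbf{x}_1\cup\mathbf{x}_2)=A(\mathbf{x}_1)+A(\mathbf{x}_2)-\tfrac12$, since exact additivity forces the check $\tfrac{(n+m)-1}{2}=\tfrac{n-1}{2}+\tfrac{m-1}{2}$, which is off by $\tfrac12$. The extra ``$+\tfrac12$'' you insert into the numerical identity at the end is not produced by anything in your argument --- it is precisely the contribution of the misplaced $\mathbb{O}$-swap that your block decomposition dropped.

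The paper's proof isolates exactly this term: it first translates the torus so the two blocks sit anti-diagonally (making all cross terms vanish rather than constant --- a cosmetic difference from your setup), observes that the swap leaves $\mathcal{J}(\mathbf{x},\mathbf{x})$ and $\mathcal{J}(\Xmark,\Xmark)$ unchanged but satisfies $\mathcal{J}(\Omark',\Omark')+1=\mathcal{J}(\Omark,\Omark)$, hence $M_{\Omark}=M_{\Omark'}+1$, and then cancels the resulting $+\tfrac12$ in $A$ against the change of normalization constant from the two-component link convention $\tfrac{n-l}{2}$ with $l=2$ (used for $\Grid'$) to the knot convention $\tfrac{n-1}{2}$ (used for $\Grid$). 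To repair your argument you should: (i) prove additivity of $M_{\mathbb{O}}-M_{\mathbb{X}}$ only for the disjoint-union diagram $\Grid'$, where the block decomposition is genuinely valid; and (ii) separately compute the effect of interchanging the two $\mathbb{O}$-markings on $\mathcal{J}(\mathbf{x},\mathbb{O})$ and $\mathcal{J}(\mathbb{O},\mathbb{O})$, showing the net shift in $M_{\mathbb{O}}$ is $+1$, which supplies the missing $\tfrac12$.
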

\begin{proof}
First, given the above construction, perform a torus translation on $\Grid'$ so that $\Grid_1$ is positioned on the upper-left and $\Grid_2$ is positioned in the lower-right corner. Since there are no new $\Xmark$ or $\Omark$-markings, or intersection points of grid states to the north-east (NE) of the diagrams $\Grid_1$ and $\Grid_2$ when forming the disjoint union, there will be no shift in the gradings of $\mathbf{x_1}$ and $\mathbf{x_2}$ viewed as a set of coordinates in $\Grid'$. After interchanging the $\Omark$-markings and obtaining $\Grid$, we observe that $\Xmark$-markings and the position of grid states do not change. Hence $\mathcal{J}(\mathbf{x_1} \cup \mathbf{x_2},\mathbf{x_1} \cup \mathbf{x_2})$ and $\mathcal{J}(\Xmark, \Xmark)$ does not change calculated in $\Grid$ or $\Grid'$. The only additional contribution comes from the relative position of $\Omark$-markings, which yields $\mathcal{J}(\Omark',\Omark')+1=\mathcal{J}(\Omark,\Omark)$. This immediately tells us $M_{\Omark}(\mathbf{x_1} \cup \mathbf{x_2})=M_{\Omark'}(\mathbf{x_1} \cup \mathbf{x_2})+1$. Notice that $\Grid'$ represents a link, so we use the normalized Alexander function for $\Grid'$ from Equation (\ref{alexandergridlink}).

In this case $l=2$ and combining this with the Maslov grading shift above yields the result.
\end{proof}

To show that $\varepsilon$ defined using grid diagrams is a concordance invariant, we need to prove some properties of $\varepsilon$, analogous to that of $\varepsilon$ in knot Floer homology. For that purpose, we will be using the following results from \cite{ozsvath2015grid}.    

\begin{Prop}[\cite{ozsvath2015grid} Proposition 2.6.11]
\label{3.2}
Suppose that two knots $K_1,K_2$ can be connected by a smooth, oriented, genus $g$ cobordism $W$. Let $\mathcal{U}_b(K_1)$ and $\mathcal{U}_d(K_2)$ denote the disjoint unions of $K_1$ and $K_2$ with $b$ and $d$-many unlinked, unknotted components respectively. Then there are knots $K_1', K_2'$ and integers $b,d$ with the following properties:
\begin{itemize}
    \item $\mathcal{U}_b(K_1)$ can be obtained from $K_1'$ by $b$ simultaneous saddle moves.
    \item $K_1'$ and $K_2'$ can be connected by a sequence of $2g$ saddle moves.
    \item $\mathcal{U}_d(K_2)$ can be obtained from $K_2'$ by $d$ simultaneous saddle moves.
\end{itemize}
\end{Prop}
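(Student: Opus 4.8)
The plan is to read the decomposition off a Morse function on the cobordism. After discarding any closed components (which are irrelevant) we may assume $W$ is a connected, oriented surface of genus $g$ with $\partial W = K_1 \sqcup K_2$, properly embedded in $S^3 \times [0,1]$ with $W \cap (S^3 \times \{i\}) = K_{i+1}$. Choose a generic Morse function $f \colon W \to [0,1]$ agreeing with the height coordinate near the boundary and having distinct critical values; its critical points have index $0$ (local minima, which appear in the associated movie as births of small split unknotted circles), index $1$ (saddles, i.e. exactly the band/saddle moves), or index $2$ (local maxima, deaths of split unknotted circles).

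Next I would apply the standard rearrangement of critical points by index so that every index-$0$ value lies in $(0,\tfrac13)$, every index-$1$ value in $(\tfrac13,\tfrac23)$, and every index-$2$ value in $(\tfrac23,1)$. Since below level $\tfrac13$ the only critical points are the $b$ births, each of which creates a tiny unknotted circle in its own ball disjoint from everything else and nothing disturbs these circles before level $\tfrac13$, the level set $f^{-1}(\tfrac13)$ is $\mathcal{U}_b(K_1)$; symmetrically $f^{-1}(\tfrac23) = \mathcal{U}_d(K_2)$ with $d$ the number of deaths. The middle piece $W_{\mathrm{mid}} := f^{-1}([\tfrac13,\tfrac23])$ is thus a cobordism from $\mathcal{U}_b(K_1)$ to $\mathcal{U}_d(K_2)$ assembled from $s$ saddle moves, and comparing Euler characteristics, $\chi(W) = b - s + d$ on one hand and $\chi(W) = -2g$ on the other (connected genus $g$ with two boundary circles), forces $s = b + d + 2g$.

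The remaining, and principal, task is to reshuffle the $s$ saddles of $W_{\mathrm{mid}}$ into the three advertised blocks. The idea is to push downward the saddles that first engage the born unknots. For a born unknot $c$, sitting just above level $\tfrac13$ in a small ball $B_c$, follow the movie upward to the first saddle $\mu_c$ whose band meets the component of $c$; since all earlier activity is supported away from $B_c$, $\mu_c$ can be slid down past those saddles to occur right after level $\tfrac13$, and because $c$ is a split unknot with a free band $\mu_c$ may be taken to be a fusion merging $c$ into another component. Performing this successively for all $b$ born unknots produces, after the first $b$ (mutually disjoint, hence simultaneous) saddles, a single-component diagram — a knot $K_1'$ — which exhibits $\mathcal{U}_b(K_1)$ as obtained from $K_1'$ by $b$ simultaneous saddle moves; the mirror-image argument from the top isolates the last $d$ saddles as $d$ simultaneous fission saddles realizing $\mathcal{U}_d(K_2)$ from a knot $K_2'$. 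What remains between $K_1'$ and $K_2'$ is a sequence of $s - b - d = 2g$ saddles, giving the middle bullet.

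The step I expect to be the real obstacle is making this last reorganization rigorous: one must verify that ``the first saddle touching a born unknot'' can honestly be commuted down past all the intervening saddles — a rearrangement statement for $1$-handles attached to a surface in $S^3\times I$, in the spirit of Milnor's rearrangement lemma but requiring one to eliminate the relevant gradient trajectories between equal-index critical points — and one must dispose of the degenerate cases by induction on $b+d$: a born unknot never touched by any saddle survives to the top and can be cancelled against a death (lowering $b$ and $d$), while a saddle that cannot be arranged as a fusion is re-examined after such a cancellation. The Euler-characteristic bookkeeping above, together with the fact that after deleting the first $b$ and last $d$ saddles both ends are single-component, is what pins the middle block to exactly $2g$ saddles and guarantees that $K_1'$ and $K_2'$ are genuine knots.
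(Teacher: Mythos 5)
This proposition is not proved in the paper at all: it is imported verbatim from \cite{ozsvath2015grid} (Proposition 2.6.11) and used as a black box, so there is no in-paper proof to compare against. Your argument is, in outline, exactly the standard Morse-theoretic normal-form argument that underlies the cited result: make the projection to $[0,1]$ Morse on $W$, rearrange critical points by index so the level sets at heights $\tfrac13$ and $\tfrac23$ are $\mathcal{U}_b(K_1)$ and $\mathcal{U}_d(K_2)$, push the fusion saddles that absorb the born circles down to the bottom (and, dually, the fission saddles that create the dying circles up to the top), and pin the size of the middle block to $2g$ by the Euler characteristic count $\chi(W)=b-s+d=-2g$. That bookkeeping is correct, as is the observation that $b$ fusions applied to $b+1$ components leave a single knot $K_1'$.

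The one place your sketch is genuinely thin is the point you yourself flag, and it is worth being precise about why: it is not enough to commute ``the first saddle meeting the component of $c$'' downward — you must also arrange that this saddle is a \emph{fusion} (one foot on $c$, one foot on a different component) rather than a band with both feet on $c$, and that the $b$ resulting bands can be isotoped to be pairwise disjoint so the moves are genuinely simultaneous. The first issue is handled in the standard treatments by using the descending disk of the birth to slide or cancel such a self-band (your induction on $b+d$ is the right shape of argument but is only gestured at), and the disjointness of the bands follows from general position of their core arcs in $S^3$, not of the bands themselves. With those two points filled in, your proof is complete and coincides with the one in the reference the paper cites.
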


\begin{Lemma}[\cite{ozsvath2015grid} Lemma 8.4.2]
\label{3.3}
Let $L$ be an oriented link, and let $L' = \mathcal{U}_1(L)$. Then there is an isomorphism of bigraded $\mathbb{F}[U]$-module
\begin{align*}
    cGH^{-}(L') \cong cGH^{-}(L)[[1,0]] \oplus cGH^{-}(L)
\end{align*}
\end{Lemma}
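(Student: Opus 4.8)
The plan is to reduce the statement to an almost trivial direct-sum decomposition of chain complexes by realizing $\mathcal{U}_1(L)=L\sqcup\bigcirc$ through an explicit grid diagram built from one for $L$. Fix a grid diagram $\Grid$ for $L$ and form $\Grid'$ by placing the $2\times2$ unknot block $\Grid_{\mathcal O}$ (with its two $\Omark$'s on the diagonal and two $\Xmark$'s on the anti-diagonal) in a disjoint corner, exactly as in the connected-sum construction of Section 3 but \emph{without} interchanging any $\Omark$-markings; the result represents $L\sqcup\bigcirc$. Every grid state of $\Grid'$ is uniquely a union $\mathbf{x}\cup\mathbf{u}$ with $\mathbf{x}\in S(\Grid)$ and $\mathbf{u}\in S(\Grid_{\mathcal O})$, so $GC^-(\Grid')$ and $GC^-(\Grid)\otimes_{\mathbb{F}[U]}GC^-(\Grid_{\mathcal O})$ share the same underlying $\mathbb{F}[U]$-module.

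Next I would check that this identification respects the differential and the bigrading. After a torus translation positioning the two blocks so that neither lies to the north-east of the other --- the same normalization used in the proof of Lemma \ref{3.1} --- no empty rectangle of $\Grid'$ can have corners in both blocks, so $\partial^-_{\Grid'}=\partial^-_{\Grid}\otimes 1+1\otimes\partial^-_{\Grid_{\mathcal O}}$; and the Maslov/Alexander bookkeeping is precisely the computation carried out in Lemma \ref{3.1} (using the link normalization of the Alexander function for the $(\ell+1)$-component diagram), giving additivity of both gradings. A direct rectangle count then shows $\partial^-_{\Grid_{\mathcal O}}=0$: the two empty rectangles leaving the diagonal state each carry $U$-multiplicity one and the same target, hence cancel over $\mathbb{F}=\mathbb{F}_2$, while the two leaving the anti-diagonal state each contain an $\Xmark$-marking and so are not counted. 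Thus $GC^-(\Grid_{\mathcal O})$ is a free rank-two $\mathbb{F}[U]$-module with zero differential, generated by classes in bidegrees $(0,0)$ and $(-1,0)$, and $GC^-(\Grid')$ splits as the direct sum of two bidegree-shifted copies of $GC^-(\Grid)$ as complexes. Passing to homology gives $cGH^-(\mathcal{U}_1(L))\cong cGH^-(L)\oplus cGH^-(L)[[1,0]]$, with the shift named to match the conventions of \cite{ozsvath2015grid}.

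The main obstacle --- and the only genuinely delicate point --- is the interaction between the single-$U$ collapse and the number of $\Omark$-markings: the $2\times2$ block carries two $\Omark$'s but adds only one component, so one must verify that $cGH^-(\Grid)$ as used here agrees with the topological invariant $cGH^-(L)$ once the standard reductions absorbing the extra (up-to-$U$ acyclic) factors are taken into account, and that the bigrading normalization for the $(\ell+1)$-component diagram is the one appearing in the statement. Everything else is a routine rectangle count together with a verbatim repetition of the grading argument in Lemma \ref{3.1}.
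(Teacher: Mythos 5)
The paper offers no proof of this statement at all --- it is imported verbatim from \cite{ozsvath2015grid} as Lemma 8.4.2 --- so your argument has to stand on its own, and its opening reduction is false. A grid state of the $(n+2)\times(n+2)$ diagram $\Grid'$ is an arbitrary bijection between all $n+2$ horizontal circles and all $n+2$ vertical circles; it need not restrict to a bijection on each block. Hence $S(\Grid')$ has $(n+2)!$ elements, not $2\cdot n!$, and $GC^-(\Grid')$ does not have the same underlying module as $GC^-(\Grid)\otimes GC^-(\Grid_{\mathcal O})$. (The paper makes exactly this point just before Lemma \ref{3.1}: the diagram built from an $n\times n$ and an $m\times m$ block has $(n+m)!$ generators.) The companion claim that no empty rectangle meets both blocks also fails, because grid homology is defined on the \emph{toroidal} diagram: rectangles wrapping around the torus connect grid states that differ in one coordinate from each block, and these are precisely the ``second kind'' of boundary terms that the proof of Lemma \ref{3.5} has to work hard to control (see Figures \ref{gr1} and \ref{gr2}). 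So neither the module nor the differential splits at the chain level; the ``almost trivial direct-sum decomposition'' you are aiming for is exactly the content of the lemma, not a starting point. A correct argument needs either the domain-decomposition machinery of Lemma \ref{3.5} adapted to the split diagram, or the destabilization/filtration argument of \cite{ozsvath2015grid}.

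A secondary problem is the grading bookkeeping. Granting some K\"unneth-type decomposition, a direct computation on the $2\times2$ unknot block places its two homology generators in bidegrees $(0,0)$ and $(-1,-1)$ --- this is the factor $W$ of Proposition \ref{3.4}, reflecting that the block carries an extra $\Omark$-marking --- not $(0,0)$ and $(-1,0)$ as you assert. To land on the shift $[[1,0]]$ in the statement one must combine this with the Maslov shift produced by the disjoint-union grading formula (the $+1$ in $\mathcal J(\Omark,\Omark)$, as in the proof of Lemma \ref{3.1}) and then strip off the superfluous $W$ factor contributed by the second $\Omark$ of the block. You correctly flag this normalization as the delicate point, but the bidegrees you wrote down do not survive it as stated.
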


\begin{Prop}[\cite{ozsvath2015grid} Proposition 8.3.1]
\label{3.4}
Let $W =\mathbb{F}_{(0,0)} \oplus \mathbb{F}_{(-1,-1)} $. If $L'$ is obtained from $L$ by a split move, then there are $\mathbb{F}[U]$-module maps
\begin{align*}
   \sigma: cGH^{-}(L) \otimes W \rightarrow cGH^{-}(L') \\
    \mu: cGH^{-}(L') \rightarrow cGH^{-}(L) \otimes W
    \end{align*}
    
 with the following properties
    \begin{itemize}
        \item $\sigma$ is homogenous of degree $(-1,0)$,
        \item $\mu$ is homogenous of degree $(-1,-1)$,
        \item $\mu \circ \sigma$ is multiplication by $U$,
        \item $\sigma \circ \mu$ is multiplication by $U$.
    \end{itemize}    
\end{Prop}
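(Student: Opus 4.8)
The plan is to treat this as the grid-homology incarnation of the ``split saddle'' map and to follow the template used throughout \cite{ozsvath2015grid} for producing chain maps attached to elementary modifications of a grid diagram. The first task is to make the split move combinatorial: there are grid diagrams $\Grid$ for $L$ and $\Grid'$ for $L'$ that agree everywhere outside a single distinguished block of the grid, with the split realized by a prescribed local change of the $\mathbb{O}$- and $\mathbb{X}$-markings inside that block (one normalizes to this situation using commutations and (de)stabilizations, none of which changes $cGH^{-}$). The two-dimensional factor $W=\mathbb{F}_{(0,0)}\oplus\mathbb{F}_{(-1,-1)}$ appearing on both sides is then accounted for by the two grid states adapted to the distinguished block --- for the trivial split $L'=\mathcal{U}_1(L)$ this is exactly the two-step summand recorded in Lemma \ref{3.3} --- and the content of the statement is that, after tensoring with this local factor, the two complexes become interchangeable up to $U$.

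Next I would define $\sigma$ and $\mu$ by counting domains supported near the distinguished block, weighted by $U^{|r\cap\mathbb{O}|}$ and required to miss the $\mathbb{X}$-markings exactly as in the horizontal differential of Section 2, in the spirit of the stabilization/destabilization quasi-isomorphisms of \cite{ozsvath2015grid}: $\sigma$ counts the domains that ``insert'' the extra strand and $\mu$ those that ``absorb'' it, the $W$-factors recording which of the two local grid states is used; equivalently one presents the enlarged grid complex as a mapping cone and extracts $\sigma,\mu$ by Gaussian elimination, as in the proof of stabilization invariance. That $\sigma$ and $\mu$ are chain maps follows from the standard bijective pairing of the length-two domains occurring in $\partial^{-}\circ\sigma+\sigma\circ\partial^{-}$ (and its analogue for $\mu$): each such juxtaposition either passes through the distinguished block in two ways that cancel, or decomposes as a single admissible domain on the other diagram. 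The grading shifts $(-1,0)$ for $\sigma$ and $(-1,-1)$ for $\mu$ are then a direct computation: one substitutes the modified marking configuration into $M_{\mathbb{O}}(\mathbf{x})=\mathcal{J}(\mathbf{x},\mathbf{x})-2\mathcal{J}(\mathbf{x},\mathbb{O})+\mathcal{J}(\mathbb{O},\mathbb{O})+1$ and into the (link-normalized, cf.\ Lemma \ref{3.1}) Alexander formula and reads off how $\mathcal{J}(\mathbb{O},\mathbb{O})$, $\mathcal{J}(\mathbb{X},\mathbb{X})$ and the grid index change.

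The core of the argument is the pair of composition identities $\mu\circ\sigma=U$ and $\sigma\circ\mu=U$. I would establish these by expanding each composition as a count of juxtaposed pairs of block-supported domains on a fixed diagram and exhibiting an explicit fixed-point-free involution --- obtained by sliding the ``turn'' of the composite domain across the block --- that cancels every pair except the ones completing to a thin row or column annulus; the latter carry $\mathbb{O}$-multiplicity exactly one and hence contribute precisely $U\cdot\mathbf{x}$. This is the step I expect to be the main obstacle: fixing the local model for the split precisely enough that the domain bookkeeping, together with the $W\otimes W$ bookkeeping on both sides, stays clean, and producing the cancelling involution with no leftover terms. As a consistency check, specializing to $L'=\mathcal{U}_1(L)$ should recover Lemma \ref{3.3} together with the fact that $U$ acts on the extra $W$-factor as the nilpotent shift $\mathbb{F}_{(0,0)}\to\mathbb{F}_{(-1,-1)}$, which also pins down the grading conventions.
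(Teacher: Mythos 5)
This proposition is quoted, not proved, in the paper: it is imported verbatim as Proposition 8.3.1 of \cite{ozsvath2015grid}, so there is no in-paper argument to compare your attempt against. On its own terms, your outline follows the right architecture, which is also that of \cite{ozsvath2015grid}, Section 8.3: put the split move into a local grid normal form, define $\sigma$ and $\mu$ by a local prescription, read the bidegree shifts off the $\mathcal{J}$-formulas, and verify $\mu\circ\sigma=\sigma\circ\mu=U$ by a cancellation argument. Your identification of the $W$ factor with the component-count bookkeeping of Lemma \ref{3.3}, and the observation that the composite bidegree $(-2,-1)$ is consistent with multiplication by $U$, are both correct and useful checks.

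The gap is that the entire content of the proposition sits in the two places you leave open. First, the local model is not pinned down, and the details matter: in \cite{ozsvath2015grid} the two diagrams are arranged to share their $\mathbb{O}$-markings and their grid states, differing only in the placement of two $\mathbb{X}$-markings inside a $2\times 2$ block of squares, so that the sets $\mathrm{Rect}^{0}(\mathbf{x},\mathbf{y})$ and all $U$-weights agree on both sides and the whole comparison reduces to tracking which rectangles are excluded by the $\mathbb{X}$-avoidance condition on each diagram. Your proposal instead posits maps ``counting domains near the distinguished block \dots required to miss the $\mathbb{X}$-markings exactly as in the horizontal differential of Section 2''; that differential also imposes $|r\cap\mathbb{O}|\neq 0$ and lives on the reflected diagram $\shortminus\mathbb{G}$, so it is the wrong template here, and without an explicit formula for $\sigma$ and $\mu$ neither the chain-map verification nor the involution argument for $\mu\circ\sigma=U$ can actually be run. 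Second, you explicitly defer the composition identities as ``the main obstacle'' rather than resolving them; since those identities are precisely what the downstream injectivity argument in the proof of Theorem \ref{main} consumes, a proposal that stops there is a plan rather than a proof. The plan is sound and can be completed by following \cite{ozsvath2015grid}, but as written it does not establish the statement.
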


We can now prove Theorem \ref{main}. 
\begin{proof}[Proof of Theorem \ref{main}]
First, we argue for part a) of Theorem 1.1. By Lemma \ref{3.3}, $cGH^{-}(\mathcal{U}_b(K))/Tors \cong (cGH^{-}(K)/Tors)^{2^b}$. Recall that for $K$ a knot,
 \[cGH^{-}(K) \cong GH^{-}(K) \cong \mathbb{F}[U] \oplus Tors\]
 Let $\alpha$ be the generator of the $\mathbb{F}[U]$-module.
Let $\Grid$ be a diagram for the knot $K$. Now if $K$ is slice, $-K$ is also slice, i.e. both $K$ and $-K$ are concordant to the unknot. By Proposition \ref{3.2}, there exists $K'$ such that $\mathcal{U}_b(K)$ can be obtained from $K'$ by $b$ simultaneous saddle moves. We arrange the saddle moves such that we perform them one at a time. Now by the Proposition \ref{3.4} and by the injectivity of the maps $\sigma$ and $\mu$, the homology class represented by $\alpha$ is non-zero in $cGH^{-}(\mathcal{U}_b(-K))/Tors$. In this case, we can observe that the grid state $\mathbf{x_{\tau}}$ in $S(\mathbb{G})$ representing the homology class of $\alpha$ maps to a non-torsion homology class in $GH^{-}(-\mathbb{G})$. This is the same homology class represented by the grid state that we obtain by reflecting $\mathbb{G}$ and interchanging $\Xmark$ and $\Omark$ markings. This can be seen as follows: starting with a grid diagram of $K$, one can obtain a grid diagram representing the unknot through the moves described in Proposition \ref{3.2}. Reflect this unknot diagram and interchange $\Xmark, \Omark$ markings and we get the grid diagram of the orientation reversed unknot. Following this, we repeat the moves listed above from $K$ to unknot but now in the reverse order, to obtain the grid diagram $-\mathbb{G}$ of $-K$ (this can be done since $-K$ is also concordant to unknot). In every step, $\alpha$ is mapped to a non-torsion element in the homology of the resulting grid diagram and also $[x_{\tau}]$ is mapped to the same element that is obtained by reflecting $\mathbb{G}$ and interchanging $\Xmark,\Omark$ markings.

Hence, $\alpha$ is a non-zero element in $cGH^{-}(-K)/Tors$. This implies that $\varepsilon(K) = 0$

\vspace{0.2cm}

To prove part b), we rely on the symmetry properties of the grid homology and the dual complex (which is the grid complex for $-K$) constructed in \cite{ozsvath2015grid} (Proposition 7.4.3). The explicit isomorphism in this proposition maps all grid states of $\mathbb{G}$ to grid states in $\shortminus \mathbb{G}$. In particular, the non-torsion part of $GH^-(\mathbb{G})$ is supported at bigrading $(- 2 \tau, - \tau)$ and the non-torsion part of $GH^-(\shortminus \mathbb{G} )$ is supported at $(2 \tau , \tau)$. As a result, we have $GH^-(\shortminus \mathbb{G}) \cong GH^-(\mathbb{G})^* $ which implies that $\varepsilon( \shortminus \mathbb{G})= \shortminus \varepsilon (\mathbb{G})$.

\end{proof}

To prove c) and d) in Theorem 1.1, first we prove the following lemma. 

\begin{Lemma}
\label{3.5}
Let $L_1,L_2$ be two knots in $S^3$ and $[\mathbf{x}_{\tau_i}]$ be the generators of \\ $cGH^{-}(L_i)$/Tors $ \cong \mathbb{F}[U]$ for $i=1,2$. Then $[\mathbf{x}_{\tau_1} \cup \mathbf{x}_{\tau_2}]$ is a non-zero element in
\\ $cGH^{-}(L_1 \# L_2)/Tors$. 
\end{Lemma}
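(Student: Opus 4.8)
The plan is to use the Künneth-type behavior of grid homology under connected sum together with the additivity of the Alexander grading established in Lemma~\ref{3.1}. First I would recall that there is a quasi-isomorphism (or at least a filtered chain homotopy equivalence) $\mathcal{GC}^-(\Grid_1 \# \Grid_2) \simeq \mathcal{GC}^-(\Grid_1) \otimes_{\mathbb{F}[U]} \mathcal{GC}^-(\Grid_2)$, which on the level of $cGH^-$ gives $cGH^-(L_1 \# L_2) \simeq cGH^-(L_1) \otimes_{\mathbb{F}[U]} cGH^-(L_2)$; this is the grid-homological incarnation of the connected-sum formula for knot Floer homology. Under this identification the grid state $\mathbf{x}_{\tau_1} \cup \mathbf{x}_{\tau_2}$ in $S(\Grid_1 \# \Grid_2)$ corresponds (up to lower-order filtered terms) to the tensor product of the cycles $\mathbf{x}_{\tau_1}$ and $\mathbf{x}_{\tau_2}$ in the factors, so that $[\mathbf{x}_{\tau_1} \cup \mathbf{x}_{\tau_2}]$ maps to $[\mathbf{x}_{\tau_1}] \otimes [\mathbf{x}_{\tau_2}]$ in $cGH^-(L_1)/\mathrm{Tors} \otimes cGH^-(L_2)/\mathrm{Tors}$.

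Next, since $cGH^-(L_i)/\mathrm{Tors} \simeq \mathbb{F}[U]$ with generator $[\mathbf{x}_{\tau_i}]$, the tensor product $\mathbb{F}[U] \otimes_{\mathbb{F}[U]} \mathbb{F}[U] \simeq \mathbb{F}[U]$ is again free of rank one, generated by $[\mathbf{x}_{\tau_1}] \otimes [\mathbf{x}_{\tau_2}]$; in particular this class is non-torsion, hence non-zero in $cGH^-(L_1 \# L_2)/\mathrm{Tors}$. The point that needs care is that a priori the connected-sum quasi-isomorphism only tells us about the homology abstractly, and we must check that the specific grid state $\mathbf{x}_{\tau_1}\cup\mathbf{x}_{\tau_2}$ is the one carrying the generator. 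Here Lemma~\ref{3.1} does the essential work: it shows $A(\mathbf{x}_{\tau_1}\cup\mathbf{x}_{\tau_2}) = A(\mathbf{x}_{\tau_1}) + A(\mathbf{x}_{\tau_2}) = -\tau(L_1) - \tau(L_2) = -\tau(L_1\#L_2)$, using additivity of $\tau$. Since the non-torsion generator of $cGH^-(L_1\#L_2)/\mathrm{Tors}$ lives in Alexander grading exactly $-\tau(L_1\#L_2)$, and since $\mathbf{x}_{\tau_1}\cup\mathbf{x}_{\tau_2}$ is a cycle in that grading (its outgoing rectangles are, up to the crossover region, unions of outgoing rectangles in the two factors, all of which either vanish or raise $U$-power appropriately), the class $[\mathbf{x}_{\tau_1}\cup\mathbf{x}_{\tau_2}]$ must agree up to a unit and a $U$-power with the generator — and by the grading computation the $U$-power is zero, so it is exactly the generator.

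The main obstacle I anticipate is making the correspondence between $\mathbf{x}_{\tau_1}\cup\mathbf{x}_{\tau_2}$ and the Künneth generator rigorous at the chain level: the connected-sum operation on grid diagrams (interchanging the $\Omark$-markings across the two blocks, as described before Lemma~\ref{3.1}) does not literally split the differential as a tensor product of differentials, because of rectangles that cross the interchanged $\Omark$-markings. To handle this I would argue that such ``crossing'' rectangles either contain an $\Omark$ (so contribute a positive power of $U$ and hence drop out when we pass to $GH^-$ modulo $U$-torsion in the relevant grading) or are killed because they must contain a marking, so that on the associated graded / on homology the cycle $\mathbf{x}_{\tau_1}\cup\mathbf{x}_{\tau_2}$ genuinely represents $[\mathbf{x}_{\tau_1}]\otimes[\mathbf{x}_{\tau_2}]$. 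Alternatively — and more cleanly — I would invoke the known stable equivalence of $\mathcal{GC}^-$ with $CFK^-$ and cite the connected sum formula there, then transport the generator back; the grading bookkeeping via Lemma~\ref{3.1} then pins down that $\mathbf{x}_{\tau_1}\cup\mathbf{x}_{\tau_2}$ is the correct representative. Either route reduces the statement to the grading computation plus the rank-one freeness of $\mathbb{F}[U]\otimes_{\mathbb{F}[U]}\mathbb{F}[U]$.
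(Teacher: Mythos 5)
Your overall architecture (a K\"unneth/tensor-product description of $cGH^-(L_1\# L_2)$ plus the Alexander-grading bookkeeping of Lemma~\ref{3.1} and additivity of $\tau$) matches the framing the paper gives \emph{before} its proof, where $[\mathbf{x}_{\tau_1}\cup\mathbf{x}_{\tau_2}]=p(U)\cdot\alpha$ is reduced to $p(U)=1$. But that framing presupposes the hard part, and your proposal does not actually supply it: you must first know that the \emph{specific chain} $\mathbf{x}_{\tau_1}\cup\mathbf{x}_{\tau_2}$ is a cycle whose class is non-torsion before any grading argument can pin down the $U$-power. An abstract quasi-isomorphism $cGH^-(L_1\# L_2)\simeq cGH^-(L_1)\otimes_{\mathbb{F}[U]}cGH^-(L_2)$ only identifies the module up to isomorphism as $\mathbb{F}[U]\oplus(\mathrm{torsion})$; it does not say which cycle of the connected-sum grid complex carries the free generator, and ``transporting the generator back'' through a stable equivalence is exactly the step you cannot perform without an explicit chain map. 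The grading computation is not a substitute: there are many cycles in Alexander grading $-\tau(L_1\#L_2)$, most of them boundaries or torsion, so ``right grading $\Rightarrow$ generator'' is circular --- it only works once non-torsionness of this particular class is already in hand. This identification at the chain level is not a technicality you can wave off, since the whole point of the lemma (for computing $\varepsilon$) is to track this concrete grid state through the reflected diagram.

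Your proposed chain-level fix is also incorrect as stated: a rectangle crossing between the two blocks that contains an $\Omark$-marking contributes $U^{>0}\cdot\mathbf{y}$ to $\partial^-$, and such terms do \emph{not} ``drop out when we pass to $GH^-$ modulo $U$-torsion'' --- they are genuine summands of the differential and could a priori make the class a boundary or a torsion class. What the paper does instead, and what your write-up is missing, is a direct combinatorial argument carried out first on the diagram $\Grid'$ of $L_1\sqcup L_2$: (i) every rectangle leaving $\mathbf{x}_{\tau_1}\cup\mathbf{x}_{\tau_2}$ that connects the two blocks contains an $\Xmark$-marking, so the union is a cycle; (ii) any rectangle into the union (or into $U^n$ times it) is decomposed into ``grid representatives'', i.e.\ empty rectangles in $\Grid_1$ and $\Grid_2$ separately with matching $\Omark$-multiplicities, so that a bounding chain would force $\mathbf{x}_{\tau_i}\in \mathrm{Im}(\partial_i^-)$ or make $[\mathbf{x}_{\tau_i}]$ torsion, contradicting the hypothesis; and (iii) the statement is transferred from $L_1\sqcup L_2$ to $L_1\# L_2$ via the maps $\sigma,\mu$ of Proposition~\ref{3.4}, whose composites are multiplication by $U$ and hence injective on the free part. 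That decomposition-into-factors step is the missing idea in your proposal.
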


Before starting the proof, we describe the union `$\cup$' notation here. Let $\mathbf{x}$ and $\mathbf{y}$ be two chains in $GC^-(\Grid_1)$ and $GC^-(\Grid_2)$ respectively, and assume that we can write them as $\mathbf{x} = \mathbf{x_1} + \mathbf{x_2}$ and $\mathbf{y} = \mathbf{y_1} + \mathbf{y_2}$ where $\mathbf{x}_i, \mathbf{y}_i$ are grid states in $\mathbb{G}_1, \mathbb{G}_2$. Then the union notation is used as follow;
\begin{align*}
    \mathbf{x} \cup \mathbf{y} = \mathbf{x}_1 \cup \mathbf{y}_1 + \mathbf{x}_1 \cup \mathbf{y}_2 + \mathbf{x}_2 \cup \mathbf{y}_1 + \mathbf{x}_2 \cup \mathbf{y}_2
\end{align*}
where the union in each one of the summands is as described in Lemma \ref{3.1}. 

First, we observe that if $\alpha$ is the generator of the $\mathbb{F}[U]$ module $GH^{-}(L_1 \# L_2)/Tors$, then we can argue that $[\mathbf{x}_{\tau_1} \cup \mathbf{x}_{\tau_2}] = p(U) \cdot \alpha$, for some non-zero polynomial $p(U) \in \mathbb{F}[U]$. Assuming the additivity of $\tau$ invariant under connected sum operations (from literature in knot Floer homology, see \cite[Section 7]{ozsvath2004holomorphic}) i.e. \[\tau(K_1 \# K_2) = \tau(K_1) + \tau(K_2)\] and Lemma \ref{3.1}, we see that $p(U) = 1$ and $[\mathbf{x}_{\tau_1} \cup \mathbf{x}_{\tau_2}]$ is indeed a generator of the non-torsion part of $GH^{-}(L_1 \# L_2)$.
Hence, to determine the $\varepsilon$ of $L_1\#L_2$, we need to track the element $[\mathbf{x}_{\tau_1} \cup \mathbf{x}_{\tau_2}]$ after reflecting the grid diagram of $L_1\#L_2$ with respect to a horizontal axis and switching all $\mathbb{X}$ and $\mathbb{O}$ markings.

Notice that proving Lemma \ref{3.5} for $L_1 \sqcup L_2$ and combining it with Proposition \ref{3.4}, the result will follow for $L_1 \# L_2$.  

\begin{figure}
    \centering
    \includegraphics[width=6cm]{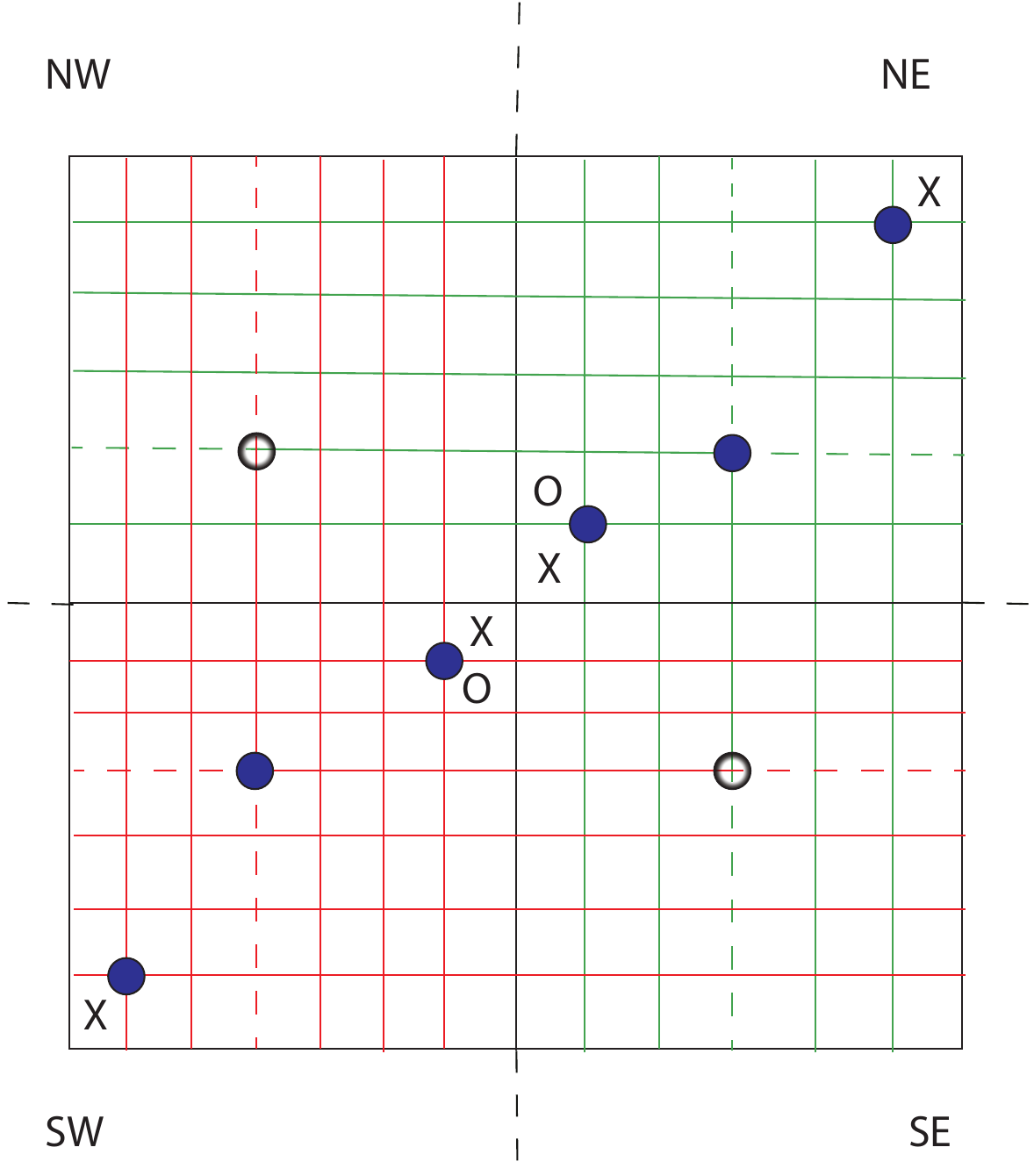}
    \caption{Rectangular regions marked by dashed lines with vertices $\mathbf{y}$ and $\mathbf{x'}$}
    \label{gr1}
\end{figure} 

\begin{figure}
    \centering
    \includegraphics[width=6cm]{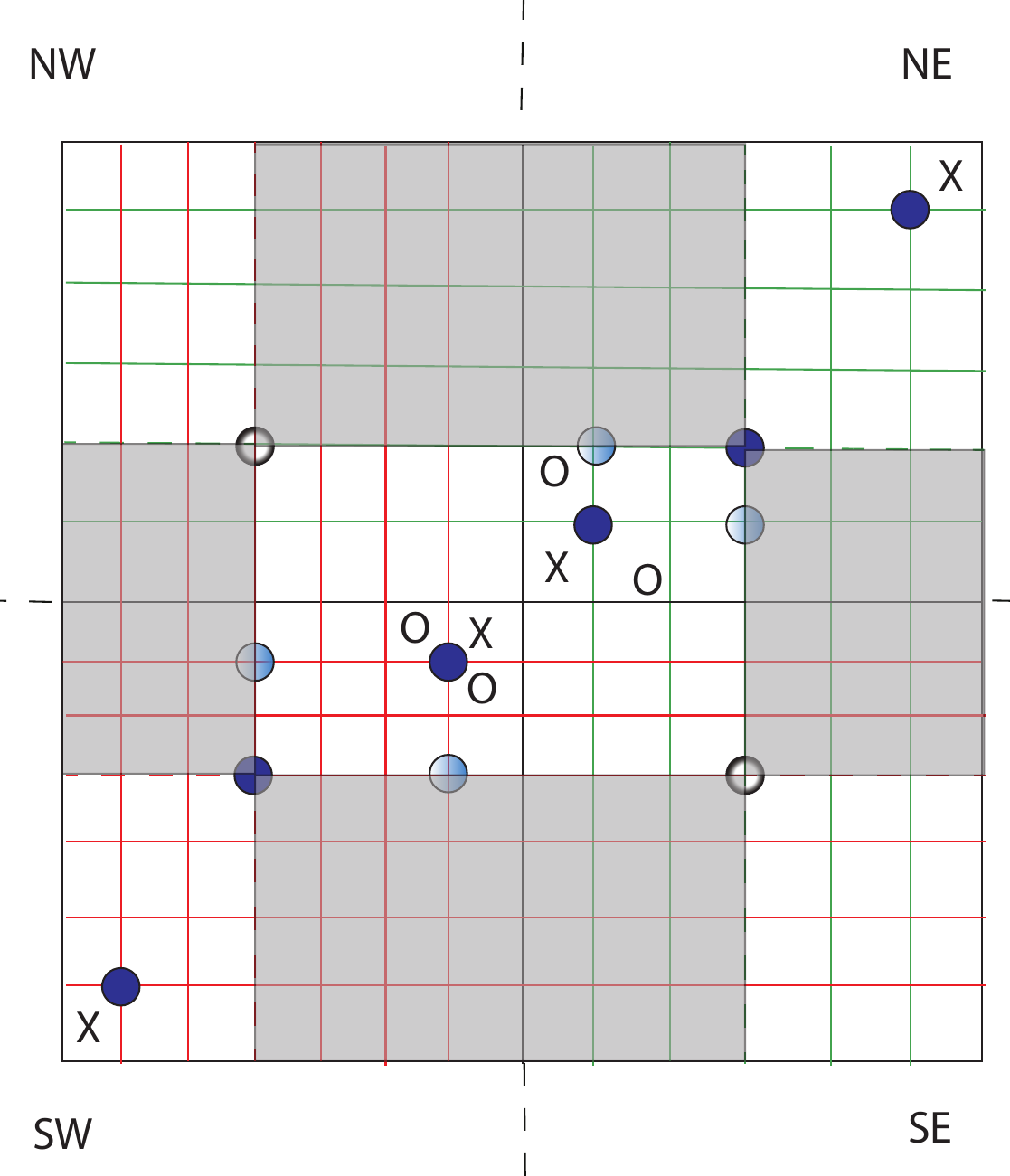}
    \caption{$\mathbf{x'}, \mathbf{y}$ and grid representatives of $\mathbf{y}$ when $\mathbf{x'}$ and $\mathbf{y}$ are connected via shaded rectangles}
    \label{gr2}
\end{figure}

\begin{proof} [Proof of Lemma \ref{3.5}]
First we show that the element $[\mathbf{x}_{\tau_1} \cup \mathbf{x}_{\tau_2}]$ is non-zero in the homology of $GC^{-}(\mathbb{G}')$, where $\mathbb{G}'$ denotes the grid diagram of $L_1 \sqcup L_2$.

Observe that for a grid state $\mathbf{x} \in S(\mathbb{G}')$ which has intersection points lying entirely in the green (upper-right) and the red portion (lower-left) (See Figure [\ref{gr1}]), grid states in $\partial^{-}_{\Xmark}(\mathbf{x})$ can be of two kind. In the first case, these grid states in the boundary have components only in the green or the red portion of the grid diagram which means that the rectangles between these grid states are rectangles coming from $\Grid_1$ and $\Grid_2$ prior to forming disjoint union. If $\partial^{-}_{\Xmark}(\mathbf{x})$ consists of only these type of grid states, then we have no new contribution in the homology. Hence, the triviality/non-triviality of $\mathbf{x}$ in $\Grid'$ is determined by the individual rectangles counted in $\Grid_1$ and $\Grid_2$ by $(\partial^-_{\Xmark})_1$ and $(\partial^-_{\Xmark})_2$ (where $(\partial^-_{\Xmark})_i$ denote the boundary map of $GC^-(\Grid_i)$). In the second case,  they can differ from $\mathbf{x}$ in two intersection points, one of which lie in NW and other one lie in SE grid blocks. Figure [\ref{gr1}] and Figure [\ref{gr2}] show such examples.

Non-triviality of $[\mathbf{x}_{\tau_1}]$ and $[\mathbf{x}_{\tau_2}]$ in $GH^{-}(L_1)$ and $GH^{-}(L_2)$, respectively, combined with the following construction will give us the necessary result. We start with performing one $X:NE$ and one $X:SW$ stabilization in both $\Grid_1$ and $\Grid_2$ at the upper-right and lower-left part of the diagrams. New diagrams contain two extra intersection points coming from the intersection points of the newly introduced vertical and horizontal lines. After a stabilization move, this new intersection point is added to all grid states which creates a bijection between the grid states before the stabilization and the grid states after the stabilization. As a result, these two new intersection points are added to $\mathbf{x}_{\tau_1} \cup \mathbf{x}_{\tau_2}$ after stabilizing the diagrams as shown in Figure \ref{gr1}.

Now we start our proof for the non-triviality of  $[\mathbf{x}_{\tau_1} \cup \mathbf{x}_{\tau_2}] \in cGH^-(\mathbb{G}')$. Let $\mathbf{x}', \mathbf{y} \in S(\mathbb{G}')$ such that $\mathbf{x}'$ and $\mathbf{y}$ differ in two components in NE and SW part of the grid (ref. Figure [\ref{gr1}], where $\mathbf{x'}$ contains the dark blue dots and $\mathbf{y}$ differs from $\mathbf{x'}$ hollow grey dots in NW and SE part of the grid, and they agree in all other coordinates). Assume that $\mathbf{x'}$ is a grid state in the chain $\mathbf{x}_{\tau_1} \cup \mathbf{x}_{\tau_2}$.

 As discussed earlier, given a grid state $\mathbf{x_1} \in S(\mathbb{G}_1)$ and another grid state $\mathbf{x_1'} \in \partial^-_1(\mathbf{x_1})$, any empty rectangle connecting $\mathbf{x_1}$ and $\mathbf{x_1'}$ can be viewed as an empty rectangle in $\Grid'$ with the same $\mathbb{O}$-multiplicity by simply taking the union of both grid states with another grid state $\mathbf{x_2} \in S(\mathbb{G}_2)$. Reversing this idea, we decompose the domains in $\Grid'$ into smaller domains in $\Grid_1$ and $\Grid_2$ by finding corresponding \textit{grid representatives} in $\Grid_1$ and $\Grid_2$. Hence, let $\mathbf{x}$ be a grid state in the cycle that represents the homology class $[\mathbf{x}_{\tau_1}\cup \mathbf{x}_{\tau_2}]$ and let $\mathbf{y}$ be another grid state such that there is a rectangle connecting them such that $\mathbf{x} = \partial^-_{\Xmark} \mathbf{y}$. We find the grid representatives of $\mathbf{y}$ in $\mathbb{G}_1$ and $\mathbb{G}_2$ such that:
   
\begin{itemize}
    \item $\mathbf{y_i} \in S(\mathbb{G}_i)$ for $i=1,2$.
    \item $\mathbf{y_i}$ differs from $\mathbf{x}|_{\mathbb{G}_i}$ in two coordinates one of which is the newly introduced intersection point that is closer to the middle of the diagram where $\mathbb{G}_1, \mathbb{G}_2$ meet. Notice that this also determines the other coordinate where these grid states differ.
    \item the sum of the $\mathbb{O}$-multiplicities in the empty rectangles connecting $\mathbf{x}|_{\mathbb{G}_1}, \mathbf{y_1}$ and $\mathbf{x}|_{\mathbb{G}_2},  \mathbf{y_2}$ is 1 more than the $\mathbb{O}$-multiplicities of the empty rectangle connecting $\mathbf{x}, \mathbf{y}$ in $\mathbb{G}'$.  
    \end{itemize}
    
In Figure [\ref{gr1}], such grid representatives of $\mathbf{y}$ are shown. After finding these grid representatives, the above points imply;
\[\mathbf{x} = \partial_{\Xmark}^- \mathbf{y} \Rightarrow  \mathbf{x}|_{\mathbb{G}_i} = (\partial_{\Xmark}^{-})_i \mathbf{y_i}, i=1,2\]
Since $\mathbf{x}|_{\Grid_i}$ are grid states in the chains $\mathbf{x}_{\tau_i}$ and $\mathbf{x}_{\tau_i} \notin Im(\partial_{\Xmark}^-)_i$ for $i=1,2$, we get that $(\mathbf{x}_{\tau_1} \cup \mathbf{x}_{\tau_2}) \notin Im(\partial_{\Xmark}^-)$. 

By construction, $\partial_{\Xmark}^- (x_{\tau_1} \cup x_{\tau_2}) = 0$. This is because if there is some $\mathbf{y}$ that differs from $\mathbf{x}'$ in two vertices (where $\mathbf{x}'$ is a grid state in the chain $(x_{\tau_1} \cup x_{\tau_2}))$, then both the rectangles with those vertices will have $\Xmark$-markings in it and they will not be counted by the differential $\partial^-_{\Xmark}$. (see Figure [\ref{gr1}]).

To complete the proof, we need to show that [$\mathbf{x}_{\tau_1} \cup \mathbf{x}_{\tau_2}$] is a non-torsion element in $GH^{-}(\mathbb{G}')$. If [$\mathbf{x}_{\tau_1} \cup \mathbf{x}_{\tau_2}$] is a torsion element, then for all $n > N$, $U^n \cdot [\mathbf{x}_{\tau_1} \cup \mathbf{x}_{\tau_2}] = 0$, for some natural number $N$. This implies that there exists $\mathbf{y'} \in GC^{-}(\mathbb{G}')$ such that $\partial_{\Xmark}^- \mathbf{y'} = V_i^n \cdot (\mathbf{x}_{\tau_1} \cup \mathbf{x}_{\tau_2})$, for some $i$. Fpr this case, we can follow the same strategy as before and decompose the domains into $\Grid_1$ and $\Grid_2$, for which appropriate $\Omark$-multiplicities add up to produce $U^n$. Using the fact that $[\mathbf{x}_{\tau_i}]$ is non-torsion in $GH^{-}(\mathbb{G}_i)$ for $i=1,2$, the previous argument completes the proof.

\end{proof}

Now, we are ready to prove the behavior of $\varepsilon$ under connected sum operation.

\begin{Lemma} Let $\Grid_1$ and $\Grid_2$ be grid diagrams for the knots $K_1$ and $K_2$ respectively.
If $\varepsilon(\Grid_1) = \varepsilon(\Grid_2)$, then $\varepsilon(\Grid_1 \# \Grid_2) = \varepsilon(\Grid_1)$ and 
if $\varepsilon(\Grid_2) = 0$, then $\varepsilon(\Grid_1\# \Grid_2) = \varepsilon(\Grid_1)$.
\end{Lemma}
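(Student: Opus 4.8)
The plan is to read $\varepsilon(\Grid_1\#\Grid_2)$ off the Definition of $\varepsilon$ by tracking the class $[\mathbf{x}_{\tau_1}\cup\mathbf{x}_{\tau_2}]$, using Lemma \ref{3.5} to identify it and the domain–decomposition technique from the proof of Lemma \ref{3.5} to compute the horizontal differential on it. First, by Lemma \ref{3.5} together with the additivity of $\tau$ recalled above, $[\mathbf{x}_{\tau_1}\cup\mathbf{x}_{\tau_2}]$ is a generator of the free part of $GH^-(\Grid_1\#\Grid_2)$, so it is exactly the class whose interaction with the horizontal differential determines $\varepsilon(\Grid_1\#\Grid_2)$. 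Next, reflecting across a horizontal axis and interchanging $\Xmark,\Omark$ distributes over the connected-sum construction: up to grid moves (and using that the connected sum is commutative), $\shortminus(\Grid_1\#\Grid_2)$ is the diagram obtained by the same block construction applied to $\shortminus\Grid_1$ and $\shortminus\Grid_2$, and under this identification the chain we track is again $\mathbf{x}_{\tau_1}\cup\mathbf{x}_{\tau_2}$, where now $\mathbf{x}_{\tau_i}$ sits inside $GC^-(\shortminus\Grid_i)$ and is precisely the chain computing $\varepsilon(\Grid_i)$.

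The core step is a Leibniz rule for $\partial^-_{\shortminus(\Grid_1\#\Grid_2)}$. Exactly as in the proof of Lemma \ref{3.5} (first for the disjoint union, then passing to the connected sum via Proposition \ref{3.4}), every empty rectangle counted by the horizontal differential on a chain of the form $\mathbf{a}_1\cup\mathbf{a}_2$, with $\mathbf{a}_i$ built from $\shortminus\Grid_i$ and carrying the stabilized intersection points near the junction, is supported either in $\shortminus\Grid_1$ or in $\shortminus\Grid_2$: a rectangle crossing the junction necessarily contains an $\Xmark$-marking and is therefore killed by the horizontal differential. Hence
\[
\partial^-_{\shortminus(\Grid_1\#\Grid_2)}(\mathbf{a}_1\cup\mathbf{a}_2)
=\bigl(\partial^-_{\shortminus\Grid_1}\mathbf{a}_1\bigr)\cup\mathbf{a}_2
+\mathbf{a}_1\cup\bigl(\partial^-_{\shortminus\Grid_2}\mathbf{a}_2\bigr),
\]
and there is no cancellation between the two summands because a grid state appearing in the first has its $\Grid_2$-coordinates equal to those of $\mathbf{a}_2$ while one appearing in the second has its $\Grid_1$-coordinates equal to those of $\mathbf{a}_1$.

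With this in hand the proof reduces to linear algebra over $\mathbb{F}[U]$. If $\varepsilon(\Grid_1)=\varepsilon(\Grid_2)=1$, then $\partial^-_{\shortminus\Grid_i}\mathbf{x}_{\tau_i}\neq 0$, so the Leibniz rule (no cancellation) gives $\partial^-_{\shortminus(\Grid_1\#\Grid_2)}(\mathbf{x}_{\tau_1}\cup\mathbf{x}_{\tau_2})\neq 0$, hence $\varepsilon(\Grid_1\#\Grid_2)=1$. If $\varepsilon(\Grid_1)=\varepsilon(\Grid_2)=-1$, choose $\mathbf{w}_i$ with $\partial^-_{\shortminus\Grid_i}\mathbf{w}_i=\mathbf{x}_{\tau_i}$ (in particular $\partial^-_{\shortminus\Grid_i}\mathbf{x}_{\tau_i}=0$); then $\partial^-_{\shortminus(\Grid_1\#\Grid_2)}(\mathbf{w}_1\cup\mathbf{x}_{\tau_2})=\mathbf{x}_{\tau_1}\cup\mathbf{x}_{\tau_2}$, so $\varepsilon(\Grid_1\#\Grid_2)=-1$. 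In the remaining cases $\varepsilon(\Grid_2)=0$, so $\partial^-_{\shortminus\Grid_2}\mathbf{x}_{\tau_2}=0$ and $\mathbf{x}_{\tau_2}\notin\mathrm{Im}(\partial^-_{\shortminus\Grid_2})$; then $\partial^-_{\shortminus(\Grid_1\#\Grid_2)}(\mathbf{x}_{\tau_1}\cup\mathbf{x}_{\tau_2})=(\partial^-_{\shortminus\Grid_1}\mathbf{x}_{\tau_1})\cup\mathbf{x}_{\tau_2}$, which is nonzero precisely when $\partial^-_{\shortminus\Grid_1}\mathbf{x}_{\tau_1}\neq0$ (by injectivity of the map $\mathbf{a}\mapsto\mathbf{a}\cup\mathbf{x}_{\tau_2}$), giving $\varepsilon(\Grid_1\#\Grid_2)=1$ when $\varepsilon(\Grid_1)=1$; and when $\partial^-_{\shortminus\Grid_1}\mathbf{x}_{\tau_1}=0$ one shows $\mathbf{x}_{\tau_1}\cup\mathbf{x}_{\tau_2}$ is a horizontal boundary if and only if $\mathbf{x}_{\tau_1}$ is, by decomposing the domains of a hypothetical preimage into $\shortminus\Grid_1$ and $\shortminus\Grid_2$ pieces exactly as in the ``not in the image'' and non-torsion parts of the proof of Lemma \ref{3.5}, giving $\varepsilon(\Grid_1\#\Grid_2)=\varepsilon(\Grid_1)\in\{0,-1\}$. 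Together these cases also complete the proof of parts (c) and (d) of Theorem \ref{main}.

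The main obstacle is the junction bookkeeping in the second paragraph: one must verify that \emph{every} rectangle entering the horizontal differential on the chains $\mathbf{x}_{\tau_1}\cup\mathbf{x}_{\tau_2}$, $\mathbf{w}_1\cup\mathbf{x}_{\tau_2}$ and their $\partial^-$-images is confined to a single side, i.e.\ that the $X{:}NE$ and $X{:}SW$ stabilizations introduced in the proof of Lemma \ref{3.5} really do force an $\Xmark$-marking into any junction-crossing rectangle (so that the horizontal differential factors through $GC^-(\shortminus\Grid_1)\otimes GC^-(\shortminus\Grid_2)$ in the relevant range of Alexander gradings). Once that is pinned down, the identification of $\shortminus(\Grid_1\#\Grid_2)$ with the block construction on $\shortminus\Grid_1,\shortminus\Grid_2$ and the $\mathbb{F}[U]$-linear algebra are routine, and essentially reprove Hom's connected-sum formula for $\varepsilon$ in the grid setting.
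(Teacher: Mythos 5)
Your proposal is correct and follows essentially the same route as the paper: identify $[\mathbf{x}_{\tau_1}\cup\mathbf{x}_{\tau_2}]$ as the generator via Lemma \ref{3.5} and additivity of $\tau$, split the horizontal differential on $\shortminus(\Grid_1\#\Grid_2)$ into the two factors (junction-crossing rectangles being excluded because they contain $\Xmark$-markings), and run the case analysis. The only cosmetic difference is that you treat the $\varepsilon=-1$ case directly by exhibiting the preimage $\mathbf{w}_1\cup\mathbf{x}_{\tau_2}$, where the paper reduces it to the $\varepsilon=+1$ case by passing to $-K_1$ and $-K_2$; the junction bookkeeping you flag as the main obstacle is exactly the point the paper's proof of Lemma \ref{3.5} is meant to supply.
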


\begin{proof}
First, let $\varepsilon(\Grid_1) = \varepsilon(\Grid_2) = 0$ and $[\mathbf{x}_{\tau_i}]$ be the generator of $GH^{-}(\Grid_i)/Tors$ for $i=1,2$. 
Then by definition, we know that $[\mathbf{x}_{\tau_i}]$ is non-zero in $GH^{-}(\Grid_i)/Tors$ and in $GH^{-}(\shortminus \Grid_i)/Tors$ $i=1,2$. As a result, by Lemma \ref{3.5}, $[\mathbf{x}_{\tau_1} \cup \mathbf{x}_{\tau_2}]$ is non-zero in both $cGH^{-}(\Grid_1 \sqcup \Grid_2)/Tors$ and $\cgh(\shortminus (\Grid_1 \sqcup \Grid_2))$. Hence, $\varepsilon(\Grid_1 \# \Grid_2) = 0$ as $\mu$ maps non-torsion elements of $cGH^{-}(\Grid_1 \sqcup \Grid_2)$ to non-torsion elements of $cGH^{-}(\Grid_1 \# \Grid_2)$.

Now assume $\varepsilon(\Grid_1) = \varepsilon(\Grid_2) = 1$. By the definition of $\varepsilon$, we have $\partial_{\Xmark}^- ([\mathbf{x}_{\tau_i}]) \neq 0$ for $i=1,2$. To compute $\varepsilon(\Grid_1 \# \Grid_2)$, we look at $[\mathbf{x}_{\tau_1} \cup \mathbf{x}_{\tau_2}]$ in the grid complex of $\Grid_1 \# \Grid_2$ after reflecting the diagram and changing $\mathbb{X}$ and $\mathbb{O}$ markings as before. Observe that in this complex, we have the following relation;

\begin{align*}
    ((\partial_{\Xmark}^{-})_1([\mathbf{x}_{\tau_1}]) \cup [\mathbf{x}_{\tau_2}]) + (\mathbf{x}_{\tau_1} \cup (\partial_{\Xmark}^{-})_2([\mathbf{x}_{\tau_2}])) \in \partial_{\Xmark}^- ([\mathbf{x}_{\tau_1} \cup \mathbf{x}_{\tau_2}])
\end{align*}
which means that we see a non-trivial contribution to the boundary map and $\partial_{\Xmark}^- [\mathbf{x}_{\tau_1} \cup \mathbf{x}_{\tau_2}] \neq 0$. Hence, we conclude that $\varepsilon(\Grid_1 \# \Grid_2) = 1$. In the case $\varepsilon(\Grid_1) = \varepsilon(\Grid_2) = -1$, we apply the same argument but start with the diagrams of $-K_1$ and $-K_2$.

The cases $\varepsilon(\Grid_1)= \pm 1$ and $\varepsilon(\Grid_2)=0$ follows from the same technique. In short, we decompose the domain in $\Grid_1 \# \Grid_2$ to corresponding the grid representatives. One can see that the contribution of the domain coming from $\Grid_1$ determines $\varepsilon$ in this case and this completes the proof.

\end{proof}

\section{Some Computations for Torus Knots and Iterated Cables of Torus Knots}

In this section, we compute $\varepsilon$ for negative torus knots and examine the behavior of $\varepsilon$ under cabling.

\begin{proof}[Proof of Theorem \ref{torus}] \vspace{0.2cm}

Before we look at the different cases, it is important to note that in Section 6.3 of \cite{ozsvath2015grid}, it is shown that, the element denoted as $\mathbf{x}^+$ which takes the intersection points of upper-right corners of each $\Xmark$-marking, is a non-torsion cycle with maximal Alexander grading when we consider $(-p,q)$ torus knots. In other words, this is the generator that gives the $\tau$ of $(-p,q)$ torus knots. Hence, we will keep track of this element when we carry on the computations for $\varepsilon(\Grid_{-p,q})$. \vspace{0.1cm}

First, we would like to show that $\varepsilon(\Grid_{p,q})=0$ if $q=1$. In this case, grid index is $p+1$ and grid diagram is encoded with following permutations $\sigma_{\Omark}=(1, 2, 3, \dots , p+1)$ and $\sigma_{\Xmark}=(p+1, 1, 2 \dots , p)$. Figure [\ref{qis1}] shows a schematic picture of this case.

\begin{figure}[h!]
\centering
    \includegraphics[width=0.32\textwidth]{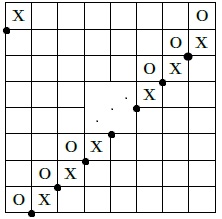}
    \caption{A schematic diagram for the case $q=1$ and black dots represent $\mathbf{x}^+$ }
    \label{qis1}
\end{figure}

As we can see in the diagram, such a diagram will always consist of $\Omark$-markings on the diagonal followed by $\Xmark$-markings in the diagonal below with a single $\Xmark$-marking at the top-left corner. Now if we reflect this diagram with respect to a horizontal axis and change all $\Xmark$ and $\Omark$-markings, we will obtain a diagram as in Figure [\ref{reflectedq1}];

\begin{figure}[h]
\centering
    \includegraphics[width=0.32\textwidth]{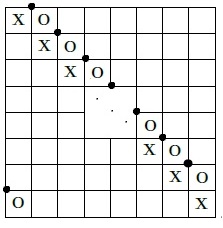}
    \caption{Grid diagram for $\shortminus \Grid_{-p,1}$}
    \label{reflectedq1}
\end{figure} 

By looking at the diagram of $\shortminus \Grid_{-p,1}$, we can immediately see that the element we obtain is still $\mathbf{x}^+$ and independent of the diagram, this element is always a cycle, referred as \textit{canonical grid state} in \cite{ozsvath2015grid}. Hence, the element we obtain is non-trivial in $GH^-(\shortminus \Grid_{-p,1})$ which implies that $\varepsilon(\Grid_{p,1})=0$.

\vspace{0.2cm}

Now assume that $q>1$. We would like to show that $\varepsilon(\Grid_{-p,q})=-1$. We will keep track of the same element as in the previous discussion, but the diagram will be slightly different. Notice that starting with $\mathbf{x}^+$, after reflecting the diagram and changing the markings, this element becomes $\mathbf{o}_+$ which denotes the grid state that consists of the lower-right intersection points of $\Omark$-markings. In this case, we see that for any other grid state $\mathbf{y} \in S(\shortminus \Grid_{-p,q})$ such that $\mathbf{o}_+$ and $\mathbf{y}$ differ in two consecutive coordinates, there exists exactly one $1 \times 1$ rectangle with an O-marking that goes from $\mathbf{y}$ to $\mathbf{o}_+$, and one can see that for such a grid state $\mathbf{y}$, there are no other elements in the boundary due to the position of the intersection points, either the rectangles cancel in pairs or they contain $\Xmark$-markings (see the Figure [\ref{qbigger1}]). Also the position of the $\Xmark$- and $\Omark$-markings make sure that $\mathbf{o}_+$ is a non-zero element in the homology of $-T_{-p,q}$.  Hence, this element is in the image of the map $\partial^{horz}_{\Xmark}$ which implies that $\varepsilon(\Grid_{-p,q})=-1$ for $q>1$.

\begin{figure}[h!]
\centering
    \includegraphics[width=0.75\textwidth]{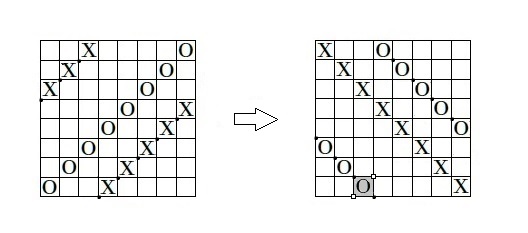}
    \caption{An illustration for $\Grid_{-3,5}$, on the right black dots are the elements we keep track of, and small squares represent another element that is in the boundary of the distinguished element, and these grid states agree on all other coordinates. Shaded $1 \times 1$ square shows an example of such a $V_i$ contribution to the boundary map for some $i$ }
     \label{qbigger1}
    \end{figure} 
Final case is when $q<1$, however we appeal to the fact that $\Grid_{p,-q}$ is same as (not considering the orientation reversal) $\shortminus \Grid_{p,q}$. Hence, $\varepsilon(\Grid_{p,q})=1$ for $q<1$, following from the previous case and part b) of Theorem \ref{main}.
\end{proof}

    
We would like to continue this discussion by proving the behaviour of $\varepsilon$ under $(p,q)$-cabling of negative torus knots.

\begin{proof}[Proof of Theorem \ref{cable}]

Before looking at the different cases of $\varepsilon$ and how it behaves under cables, we would like to describe here how we obtain $(r, rn-1)$-cable of a negative torus knot using grid diagrams, where $n$ is the writhe of $\mathbb{G}_{-p,q}$ and $r$ is the cabling coefficient. After understanding this, we follow a similar path as in \cite{hom2014bordered} and refer to Van Cott's work \cite{vancott} to generalize it to $(p,q)$-cables. Throughout the proof, we use $\mathbb{G}$ to denote the standard grid diagram for the negative torus knot $T_{-p,q}$. We begin by explaining how we obtain the $(r, rn-1)$ from a grid. 

In any grid diagram $\mathbb{G}$, an $\Xmark$ or $\Omark$-marking is a corner and it can be one of the four types; NW (northwest), NE (northeast), SW (southwest) or SE (southeast). If an $O$-marking is a SE corner, we use the notation $o_{SE}$, and a similar notation for other corners and markings. In the specific case that we have the standard grid diagram for a negative torus knot $\mathbb{G}_{-p,q}$, we have exactly $p$-many $o_{SW}$, $p$-many $o_{NE}$, $(q-p)$-many $o_{SE}$ corners. Similarly, we have $p$-many $x_{SE}$ and $q$-many $x_{NW}$ corners. To obtain a $(r,rn-1)$-cable, we replace empty squares with $r \times r$ empty blocks and for marked squares, we replace them with $r \times r$ blocks with the same marking, also repeating the same corner of the marking. This will give us $r$ parallel copies of $\mathbb{G}_{-p,q}$. To introduce one full negative twist, we choose the $o_{SE}$ $r \times r$ block and introduce the twist as shown in Figure [\ref{cabler3}] and Figure  [\ref{cabler3twist}]. The resulting diagram will have $r^2n + (r-1)$ negative crossings where $n$ is the writhe of the original knot As pointed out in \cite{heddenthesis}, this is the total number of crossings we should have after cabling.

\begin{figure}[h!]
\centering
    \includegraphics[width=14cm]{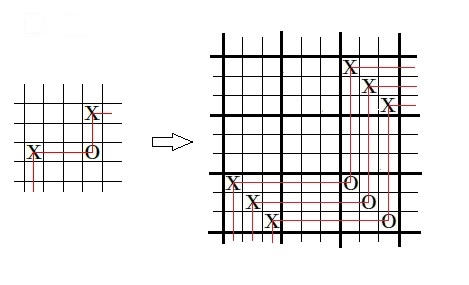}
    \caption{A local picture on a grid diagram where $r=3$ and we repeat all the marked corners preserving their directions}
    \label{cabler3}
\end{figure} 

\begin{figure}[h!]
\centering
    \includegraphics[width=12cm]{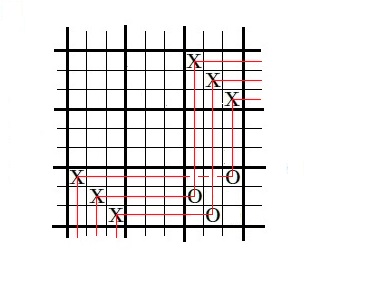}
    \caption{We introduce a full negative twist by changing the $O$-markings in $o_{SE}$ corner. This operation produces $(r-1)$-many new negative crossings. }
    \label{cabler3twist}
\end{figure} 

\begin{enumerate}
    \item Assume now $\varepsilon(\mathbb{G})=0$. By Theorem \ref{torus}, this is only possible when $|q|=1$ and this is the case when we have the unknot. Hence, if we apply the above cabling procedure, we will obtain a torus knot. This immediately tells us that $\varepsilon(\mathbb{K}_{r,rn-1})= \varepsilon(\mathbb{G}_{r, rn-1})$ where $r$ is the cabling coefficient and $n$ is the writhe of the original knot.
    
    \item More interesting case is when $\varepsilon(\mathbb{G}) \neq 0$. In this case, we start with a non-trivial negative torus knot. Assume that $q>1$, so that $\varepsilon(\mathbb{G})=-1$. Recall that, as discussed in the proof of Theorem \ref{torus}, the distinguished cycle $\mathbf{x^+}$ is the element we would like to keep track of. After applying the above cabling operation, $\mathbf{x^+}$ in the resulting diagram still has the maximal Alexander grading and it is a cycle. 
    Using Proposition 6.4.8 in \cite{ozsvath2015grid} we get that this element is non-torsion as well. Hence, we would like to keep track of this element to compute the value of epsilon. Following our description, we reflect this diagram with respect to a horizontal axis and change $X$ and $O$-markings. As before, this grid state is now in the lower-right corners of $O$-markings that we denoted as $\mathbf{o_{+}}$. Now we track this distinguished element in the grid diagram of the $(r,+1)$ cable of $n$-framed $T_{-p,q}$ to find $\varepsilon((T_{-p,q})_{r,rn+1})$.
    
    Observe that we can find the $(r,1)$-cable of an $n$-framed $T_{-p,q}$, reflect that and interchange $\Xmark, \Omark$-markings or equivalently, we can find $(r,1)$-cable of an $n$-framed $T_{p,q}$. Moreover we can perform the twist in the cable at an $o_{SW}$ corner, so that obtain a $(r, rn+1)$-cable. Now we see that around each of the $r$-clusters of $\Omark$-markings, we can perform successive grid commutation moves so that the $r$-clusters of $\Omark$-marking lie anti-diagonally (see Figure [\ref{commutation1}] and Figure [\ref{commutation2}]). It is now easy to see that the boundary of $\mathbf{o_{+}}$ is zero and we can find another grid state the boundary of which contains only $V_i \cdot \mathbf{o_{+}}$ for some $i$, see the picture on the right side of the Figure [\ref{commutation2}]. The boundary of the element only contains $V_i \cdot \mathbf{o_{+}}$ since the boundary of $\mathbf{o_{+}}$ is zero. As a result, $\varepsilon(T_{-p,q})_{r,rn+1} = -1 = \varepsilon(T_{-p.q})$. The case when $q<1$ follows from a similar argument and completes the proof.
\end{enumerate}

\vspace{0.1cm}

\end{proof}    
 
\begin{Rem}
  It is important to note here that a similar technique can be applied for iterated cables of torus knots, since in the grid diagram obtained after cabling the original torus knot, $\mathbf{x^{+}}$ still gives us the non-zero, non-torsion element with the `correct' Alexander grading i.e. one can track $\mathbf{x^{+}}$ (and thus $\mathbf{o_{+}}$) to find $\varepsilon$ of the iterated cable and the argument for those cases hold similarly. 
\end{Rem}

   \begin{figure}[h!]
\centering
    \includegraphics[width=12cm]{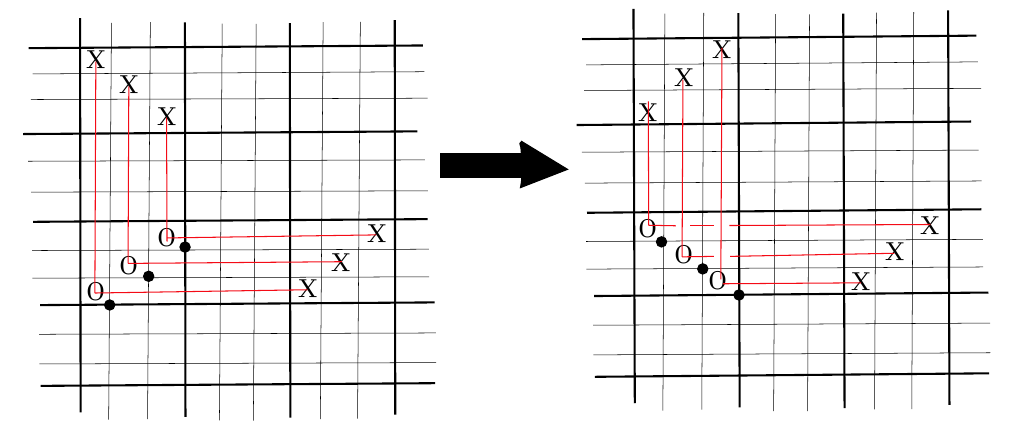}
    \caption{Commutation moves along one of non-twisting $\Omark$-marking cluster}
    \label{commutation1}
\end{figure} 

\begin{figure}[h!]
\centering
    \includegraphics[width=12cm]{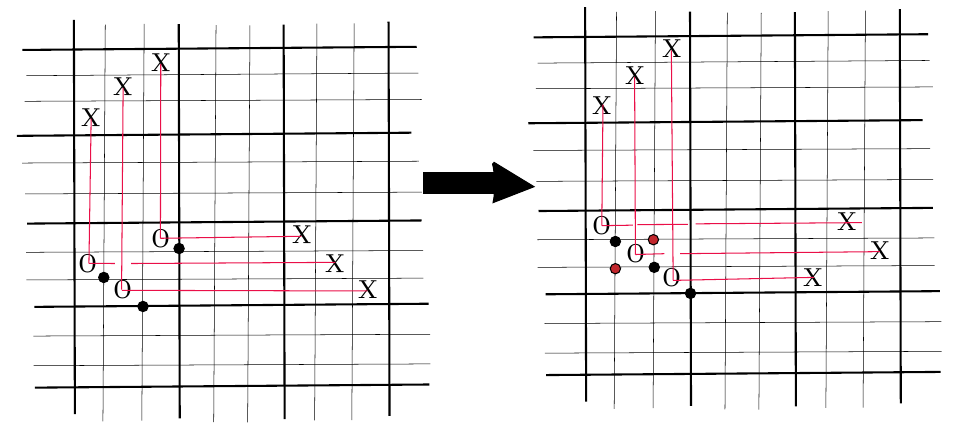}
    \caption{Commutation moves along the twisting $\Omark$-marking cluster and a grid state killing $U \cdot \mathbf{o}_{+}$}
    \label{commutation2}
\end{figure}

\section{Grids and Braids}

In this section, we recall the relation between grids and braids. We follow the notation convention of Ng and Thurston from \cite{gridbraid}.

Given a grid diagram $\mathbb{G}$, we can view it as a braid. We still require that vertical strands go over horizontal strands. We also add the condition that all horizontal segments are oriented from left to right on the diagram. To achieve this, given a grid diagram $\mathbb{G}$, if an $O$ marking on a row is to the left of an $X$ marking, join them as usual inside the grid. If an $O$ marking is to the left of an $X$ marking, then draw a line segment that points rightwards from the $O$ marking and another line segment that connect to the $X$ marking on the same row through the identified edged on the toroidal diagram. By the above construction, if $\mathbb{G}$ is a grid diagram for a knot $K$, then braid closure of $B$ is $K$. This construction produces a rectilinear braid diagram and after perturbing it accordingly, we obtain a braid diagram. An example is given in Figure \ref{braid1}.

\begin{figure}[h!]
\centering
    \includegraphics[width=14cm]{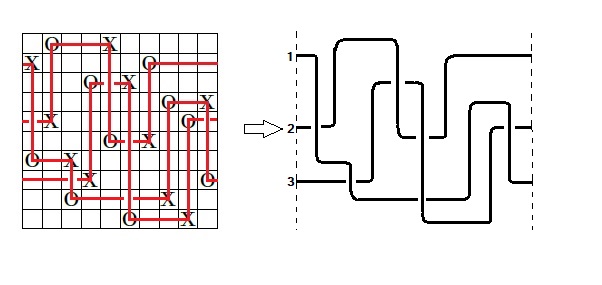}
    \caption{A braid in $\mathbf{B_3}$ with braid word $\sigma_1 \sigma_2 \sigma_1^2 \sigma_2^2$. This is also an example of a positive braid.}
    \label{braid1}
\end{figure} 

Any braid in $\mathbf{B_n}$ can be written as a product of the standard generators $\sigma_i^{\epsilon_i}$ for $i=1, \dots , n-1$ and $\epsilon_i \in \{ \pm 1\}$. In our convention, we enumerate the strands from top to bottom and a generator $\sigma_i$ refers to $i^{th}$ strand going over $(i+1)^{st}$ strand on the diagram. Notice also with this convention, $\epsilon_i$ agrees with the sign of the crossing. 

Now, we turn our focus to a particular family of braids, namely positive braids. A braid $B \in \mathbf{B_n}$ is called a positive braid if in its braid word all $\epsilon_i$ are positive. This is also equivalent to having all positive crossings. In \cite{braidfibered}, Stallings proves a more general result for homogeneous braids, which directly implies that positive braids are fibered. Combining this result with Hedden's result about strongly quasipositive knots (Proposition 2.1) in \cite{positivebraid}, we can conclude that the canonical grid state denoted as $\mathbf{x^+}$ is the element that computes $\tau$ and therefore, this is the element we need to keep track of when we compute $\varepsilon$ for positive braids.

\begin{proof}[Proof of Theorem \ref{braid}] 

Our proof relies on the observation that if a grid diagram $\mathbb{G}_+$ represents a positive braid, then we are limited to very particular local pictures and a relative $X$ and $O$-marking patterns. First of all, recall that with this convention, sign of the markings agree with $\epsilon_i$'s that appear in the braid word. Hence, we only have positive crossings on the grid diagram. Since all the horizontal strands are oriented from left to right and from $O$ to $X$, we have the local pictures as shown in Figure \ref{positivebraid}.

\begin{figure}[h!]
\centering
    \includegraphics[width=13.5cm]{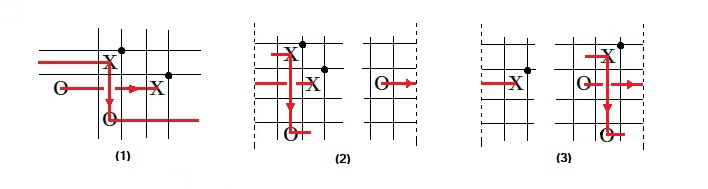}
    \caption{Different schematic local pictures of possible positive crossings. Black dots are intersection points from $\mathbf{x^+}$}
    \label{positivebraid}
\end{figure} 

Notice that the local pictures shown in Figure \ref{positivebraid} represent all possible positive crossing either they occur in the middle of the grid diagram (1), or they occur on horizontal strands that are joined outside the grid diagram (2) and (3). We mark the intersection points of $\mathbf{x^+}$ with black dots. We reflect the grid $\mathbb{G}_+$ with respect to a horizontal axis and change the markings. Similar to previous computations, the distinguished intersection points are now in the lower-right corners of $O$-markings. After this, we perform one local modification that allows us to compute epsilon. At the $O$-marking that is located at the bottom of the vertical strand, we apply an $O:NW$ type stabilization. As stated in Proposition 5.2.1 in \cite{ozsvath2015grid}, this does not change the homology, so we keep track of this distinguished element in the same homology after the stabilization. Moreover, stabilization creates a new pair of $\alpha$ and $\beta$ curve and their new intersection point is automatically added to all grid states, including $\mathbf{o_+}$. This is a one-to-one correspondence between grid states, hence we still use the same notation for this new grid state. There are exactly two rectangles from the grid state $\mathbf{o_+}$ to $\mathbf{y}$, one is the shaded rectangle in Figure \ref{positivebraidflip} and the other rectangle wraps around the torus in the complement. One can see that the shaded rectangle has non-trivial $V_i$ contribution for some $i$ whereas the other rectangle contain an $X$-marking. This implies that $\mathbf{y} \in \partial_{\Xmark}^{horz}(\mathbf{o_+}) $ and as a result, $[\mathbf{o_+}] \notin Ker(\partial_{\Xmark}^{horz})$. The same strategy also proves the existence of such a rectangle with non-trivial $V_i$ contribution for other local positive crossing pictures. Hence, we conclude $\varepsilon(\mathbb{G}_+)=1$.    

\begin{figure}[h!]
\centering
    \includegraphics[width=14cm]{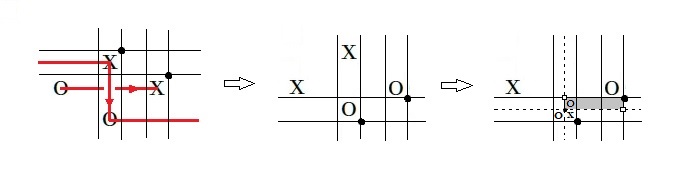}
    \caption{$O:NW$ stabilization performed on the grid in case (1). The grid state $\mathbf{y}$ is marked with small squares and the gray shaded area is the domain with non-zero contribution}
    \label{positivebraidflip}
\end{figure}

\end{proof}

\subsection*{Acknowledgements} The authors would like to thank \c Ca\u gatay Kutluhan, Andr\' as Stipsicz, John Etnyre, William Menasco, Lenhard Ng and Siddhi Krishna for their comments and suggestions on the first draft of the paper. We are especially thankful to Jen Hom for suggesting to look into positive braids and for her helpful comments. The first author would also like to acknowledge the support of the Dissertation Fellowship awarded by the Department of Mathematics at University at Buffalo during the project. The authors would also like to thank anonymous reviewers for their very helpful comments and suggestions. This work is partially supported by a Simons Foundation grant No.~519352.

\bibliographystyle{plain}
\bibliography{bibliography.bib}

\end{document}